\newlength{\lw}\setlength{\lw}{0.4pt}
\newlength{\st}\setlength{\st}{0pt}
\newlength{\qs}\setlength{\qs}{1.5em}
\newlength{\nd}\setlength{\nd}{3em}
\tikzstyle{every picture}+=[auto]
\tikzstyle{every picture}+=[bend angle=10]
\tikzstyle{every picture}+=[join=round]
\tikzstyle{every picture}+=[cap=butt]
\tikzstyle{every picture}+=[line width=\lw]
\tikzstyle{every picture}+=[double distance=2\lw]
\tikzstyle{every picture}+=[shorten >=\st]
\tikzstyle{every picture}+=[node distance=\nd]
\tikzstyle{every loop}=[->,shorten >=\st]
\tikzstyle{place}=[circle,draw,minimum size=\qs]
\tikzstyle{transition}=[rectangle,draw,minimum size=\qs]
\tikzstyle{invisible}=[draw=none,inner sep=0pt,minimum height=0pt]
\newenvironment{petrinet}[1][]{\begin{center}\begin{tikzpicture}}{\end{tikzpicture}\end{center}}
\subjclass[2010]{68W10(primary), and 68W30, 14B05, 14Q99(secondary)} 
\newtheorem{theorem}{Theorem}[section]
\theoremstyle{definition}
\newtheorem{df}[theorem]{Definition}
\newtheorem{notation}[theorem]{Notation}
\theoremstyle{definition}
\newtheorem{Example}[theorem]{Example}
\theoremstyle{definition}
\newtheorem{rem}[theorem]{Remark}
\newtheorem{lem}[theorem]{Lemma}
\newtheorem{construction}[theorem]{Construction}
\newcommand{\parens}[3]{#1#3#2}
\newcommand{\K}[1]{\parens{\left(}{\right)}{#1}}
\newcommand{\card}[1]{\parens{\left|}{\right|}{#1}}
\newcommand{\set}[1]{\parens{\left\{}{\right\}}{#1}}
\newcommand{\define}[1]{\emph{#1}}
\newcommand{\NN}{\mathbb{N}}
\DeclareMathOperator{\Sing}{Sing}
\DeclareMathOperator{\codim}{codim}
\DeclareMathOperator{\lex}{lex}
\DeclareMathOperator{\ord}{ord}
\DeclareMathOperator{\Spec}{Spec}
\newcommand{\bk}{{\Bbbk}}
\newcommand{\bK}{{\mathbb K}}
\newlist{cvdesc}{description}{1}
\setlist[cvdesc]{nosep,
labelindent=0pt,
labelwidth=2.8cm,
labelsep*=0.2cm,
leftmargin=3cm,
font=\normalfont,
align=right}
\newlist{compactitem}{itemize}{3}
\setlist[compactitem]{nosep,itemsep=2pt,topsep=3pt,labelindent=1em,leftmargin=2em}
\setlist[compactitem,1]{label=\textbullet}
\setlist[compactitem,2]{label=--}
\begin{document}
\title[Massively parallel computations in algebraic geometry]{
Towards Massively Parallel Computations in Algebraic Geometry
}
\author{Janko~B\"ohm}
\address{Janko~B\"ohm, Department of Mathematics, University of Kaiserslautern,  Erwin-Schr\"odinger-Str., 67663 Kaiserslautern, Germany}
\email{boehm@mathematik.uni-kl.de}
\author{Wolfram~Decker}
\address{Wolfram~Decker, Department of Mathematics, University of Kaiserslautern,  Erwin-Schr\"odinger-Str., 67663 Kaiserslautern, Germany}
\email{decker@mathematik.uni-kl.de}
\author{Anne~Fr\"uhbis-Kr\"uger}
\address{Anne~Fr\"uhbis-Kr\"uger, Institut f\"ur algebraische Geometrie, Leibniz Universit\"at Hannover,  Welfengarten 1, 30167 Hannover, Germany}
\email{anne@math.uni-hannover.de}

\author{Franz-Josef Pfreundt}
\address{Franz-Josef Pfreundt, Competence Center High Performance Computing, Fraunhofer ITWM,  Fraunhofer-Platz 1, 67663 Kaiserslautern, Germany}
\email{franz-josef.pfreundt@itwm.fhg.de}

\author{Mirko Rahn}
\address{Mirko Rahn, Competence Center High Performance Computing, Fraunhofer ITWM,  Fraunhofer-Platz 1, 67663 Kaiserslautern, Germany}
\email{mirko.rahn@itwm.fhg.de}

\author{Lukas~Ristau}
\address{Lukas~Ristau, Department of Mathematics, University of Kaiserslautern,  Erwin-Schr\"odinger-Str., and  Competence Center High Performance Computing, Fraunhofer ITWM,  Fraunhofer-Platz 1, 67663 Kaiserslautern, Germany, }
\email{ristau@mathematik.uni-kl.de}

\renewcommand\shortauthors{B\"ohm, J. et al}

\begin{abstract}
Introducing parallelism and exploring its use is still a fundamental challenge for the computer algebra community.
In high performance numerical simulation, on the other hand, transparent environments for distributed 
computing which follow the principle of separating coordination  and computation have been a success story 
for many years. In this paper, we explore the potential of using  this principle in the context of computer 
algebra. More precisely, we combine two well-established systems: The mathematics we are interested
in is implemented in the computer algebra system {\textsc{Singular}}, whose focus is on polynomial 
computations, while the coordination is left to the workflow management system GPI-Space, which relies 
on Petri nets as its mathematical modeling language, and has been successfully used for coordinating 
the parallel execution (autoparallelization) of academic codes as well as for commercial software in 
application areas such as seismic data processing. The result of our efforts is a major step towards a
framework for massively parallel computations in the application areas of {\textsc{Singular}},
specifically in commutative algebra and algebraic geometry. As a first test case for this framework, we have 
modeled and implemented a hybrid smoothness test for algebraic varieties which combines ideas from
Hironaka's celebrated desingularization proof with the classical Jacobian criterion. Applying our implementation 
to two examples originating from current research in algebraic geometry, one of which cannot be handled 
by other means, we illustrate the behavior of the smoothness test within our framework,
and investigate how the computations scale up to 256 cores.

\end{abstract}

\keywords{Computer algebra, Singular, distributed computing, GPI-Space, Petri nets,
computational algebraic geometry, Hironaka desingularization, smoothness test,
surfaces of general type}

\thanks{This work has been supported by the German Research Foundation (DFG) through SPP 1489 and TRR 195, Project II.5.}

\maketitle

\section{Introduction}

Experiments based on calculating examples have always played a key role in mathematical research. 
Advanced hardware structures paired with sophisticated mathematical software tools allow for far reaching experiments 
which were previously unimaginable. In the realm of algebra and its applications, where exact calculations are 
inevitable, the desired software tools are provided by computer algebra systems. In order to take full advantage of
modern multicore computers and high-performance clusters, the
computer algebra community must provide parallelism in their systems.  
This will boost the performance of the systems to a new level, thus extending the scope of applications significantly.
However, while there has been a lot of progress in this direction in numerical computing, achieving parallelization 
in symbolic computing is still a tremendous challenge both from a mathematical and technical point of view.

On the mathematical side, there are some algorithms whose basic strategy is inherently parallel, whereas many
others are sequential in nature. The systematic design and implementation of parallel algorithms is a major 
task for the years to come. On the technical side, models for parallel computing have long been studied in 
computer science. These differ in several fundamental aspects. Roughly, two basic paradigms can be distinguished 
according to assumptions on the underlying hardware. The shared memory based models allow several different 
computational processes (called threads) to access the same data in memory, while the distributed models run 
many independent processes which need to communicate their progress to one or several of the other processes.
Creating  the prerequisites for writing parallel code in a computer algebra system originally designed for sequential 
processes requires considerable  efforts which affect all levels of the system. 

In this paper, we explore an alternative way of introducing parallelism into computer algebra
computations. This approach is  non-intrusive and allows for distributed computing. It is based 
on the principle of separating coordination and computation, a principle which has already been pursued with great success 
in high performance numerical simulation. Specifically, we rely on the workflow management system GPI-Space 
\cite{GPI} for coordination, while the mathematics we are interested in is implemented in the computer algebra 
system {\sc Singular} \cite{Singular}.

 {\sc Singular} is under development at TU Kaiserslautern, focuses on polynomial computations, and has been 
successfully used in application areas such as algebraic geometry and singularity theory. GPI-Space, on the 
other hand, is under development at Fraunhofer ITWM Kaiserslautern, and has been successfully used for 
coordinating the parallel execution (autoparallelization) of academic codes as well as for commercial 
software in application areas such as seismic data processing.
As its mathematical modeling language,  GPI-Space relies on Petri nets, which are specifically designed 
to model concurrent systems, and yield both data parallelism and task parallelism. In fact, GPI-Space 
is not only  able to automatically balance, to automatically scale up to huge machines, or to tolerate machine 
failures, but can also use existing legacy applications and integrate them, without requiring any change to them.
In our case, {\sc Singular} calls GPI-Space, which, in turn,  manages several (many) instances of 
{\sc Singular} in its existing binary form (without any need for changes). The experiments carried
through so far are promising and indicate that we are on our way towards a convenient framework 
for massively parallel computations in {\sc Singular}.

One of the central tasks of computational algebraic geometry is the explicit construction of objects 
with prescribed properties, for instance to find counterexamples to conjectures or to construct general 
members of moduli spaces. Arguably, the most important property to be checked here is smoothness.
Classically, this means to apply the Jacobian criterion: If $X\subset \mathbb A^n_\bK$ 
(respectively $X\subset \mathbb P^n_\bK$) is an equidimensional affine (respectively projective) 
algebraic variety of dimension $d$ with defining equations $f_1=\dots =f_s=0$, compute a 
Gr\"obner basis of the ideal generated by the $f_i$ together with the $(n-d)\times (n-d)$ minors of the 
Jacobian matrix of the $f_i$ in order to check whether this ideal defines the empty set.
The resulting process is predominantly sequential. It is typically expensive (if not unfeasible), 
especially in cases where the codimension $n-d$ is large.

In \cite{smoothtst},  an alternative smoothness test has been suggested by the first and third author 
(see \cite{smoothtstlib} for the implementation in \textsc{Singular}). This test builds on ideas from 
Hironaka's celebrated desingularization proof \cite{Hir} and is 
intrinsically parallel. To explore the potential of our framework, we have modeled and implemented an enhanced 
version of the test  which is interesting in its own right. Following \cite{smoothtst}, we take our cue from the fact
that each smooth variety is locally a complete intersection. Roughly, the idea is then to apply Hironaka's method  
of descending induction by hypersurfaces of maximal contact (in its constructive version by Bravo, Encinas, and 
Villamayor \cite{BEV}). This allows us either to detect non-smoothness during  the process, or to finally realize
a finite covering of $X$ by affine charts such that in each chart, $X$ is  given as a smooth complete intersection. 
More precisely, at each iteration step,  our algorithm starts from finitely many affine charts $U_i$ which cover $X$,
together with varieties $W_i$ and embeddings $X\cap U_i\subset W_i\cap U_i$ such that each $W_i\cap U_i$ is a 
smooth complete intersection in $U_i$.  Providing a constructive version of Hironaka's termination criterion, 
the algorithm then either detects that $X$ is singular in one $U_i$, and terminates, or constructs for each 
$i$ finitely many affine charts $U'_{ij}$ which cover $X\cap U_i$, together with varieties $W'_{ij}\subset W_i$ and embeddings 
$X\cap U'_{ij}\subset W'_{ij}\cap U'_{ij}$ such that each $W'_{ij}\cap U'_{ij}$  is a smooth complete intersection in $U'_{ij}$
whose codimension is one less than that of $X\cap U_i$ in $W_i\cap U_i$. Since at each step, the computations in one 
chart do not depend on results from the other charts, the algorithm is indeed parallel in nature. Moreover, since our 
implementation branches into all available choices of charts in a massively parallel way, and terminates once $X$ is 
completely covered by charts, it will automatically determine a choice of charts which leads to the smoothness 
certificate in the fastest possible way. 

In fact, there is one more twist: As experiments show, see \cite{smoothtst}, the smoothness test is most effective
in a hybrid version which makes use of the above ideas to reduce the general problem to checking smoothness 
in finitely many embedded situations $X\cap U\subset W\cap U$ of low codimension, and applies (a relative 
version of) the Jacobian criterion there.

Our paper is organized as follows. In Section \ref{sec:smoothness}, we briefly review smoothness 
and recall the Jacobian criterion. In Section \ref{sec hybrid smoothness test}, we summarize what we need
from Hironaka-style desingularization and develop our smoothness test. Section \ref{sec GPI} contains a discussion
of GPI-Space and Petri nets which prepares for Section \ref{sec Petri smooth},  where we show how to model 
our test in terms of Petri nets. This forms the basis for the implementation of the test using 
{\sc{Singular}} within GPI-space. Finally, in Section \ref{sec timings}, we illustrate the behavior 
of the smoothness test and its implementation by checking two examples from current research 
in algebraic geometry. These examples are surfaces of general type, one of which cannot be handled by other means.

\section{Smoothness and the Jacobian Criterion}\label{sec:smoothness}

We describe the geometry behind our algorithm in the classical language of algebraic varieties
over an algebraically closed field. Because smoothness is a local property, and each 
quasiprojective (algebraic) variety admits an open affine covering, we restrict 
our attention to  affine (algebraic) varieties. 

Let $\bK$ be an algebraically closed field. Write $\mathbb A^n_{\bK}$
for the affine $n$-space over $\bK$. An affine variety (over $\bK$) is the common vanishing locus 
$V(f_1, \dots, f_r)\subset \mathbb A^n_{\bK}$ of finitely many polynomials $f_i\in\bK[x_1,\dots, x_n]$.
If $Z$ is such a variety, let 
$$
I_Z=\{f\in \bK[x_1,\ldots,x_n] \mid f(p)=0 \text{ for all } p\in Z\}\subset \bK[x_1,\ldots,x_n]
$$ 
be its vanishing ideal, let $\bK[Z]=\bK[x_1,\ldots,x_n]/I_Z$ be its ring of polynomial functions,
and let $\dim Z = \dim \bK[Z]$ be its dimension.

Given a polynomial $h\in\bK[x_1,\ldots,x_n]$, we write $$D(h) = \mathbb
A^n_{\bK}\setminus V(h)=\{p\in \mathbb A^n_{\bK} \mid h(p) \neq 0\}$$
for the principal open set  defined by $h$, and $\mathcal O_{Z}(Z\cap D(h))$ 
for the ring of regular functions on $Z\cap D(h)$. If $p\in Z$ is a point,
we write $\mathcal{O}_{Z,p}$ for the local ring
of $Z$ at $p$, and $\mathfrak{m}_{Z,p}$ for the maximal ideal of $\mathcal{O}_{Z,p}$. 
Recall that both rings $\mathcal O_{Z}(Z\cap D(h))$ and $\mathcal{O}_{Z,p}$ are localizations of $\bK[Z]$:
Allow powers of the polynomial function defined by $h$ on $Z$ and polynomial functions
on $Z$ not vanishing at $p$ as denominators, respectively. 

Relying on the trick of Rabinowitch, we regard $Z\cap D(h)$ as an affine 
variety: If $I_Z=\langle f_1,\dots, f_s\rangle$, identify $Z\cap D(h)$ with the vanishing 
locus $$V(f_1,\dots, f_s, ht-1)\subset \mathbb A^{n+1}_{\bK},$$ where $t$ is an extra 
variable.

The tangent space at a point $p=(a_1,\dots, a_n)\in Z$ is the linear variety
$$
T_p Z = V(d_p(f) \mid f\in I_Z)\subset \mathbb A^n_{\bK},
$$
where $d_pf$ is the differential of $f$ at $p$:
$$
d_p f=\sum_{i=1}^n \frac{\partial f}{\partial x_i}(p)(x_i-a_i)
\in \bK[x_1,\dots, x_n].
$$
We have
$$
\dim T_p Z \geq \max \{\dim V \mid V \text{ is an irreducible component of } Z \text{ through } p\},
$$
and say that $Z$ is \emph{smooth at $p$} if these numbers are equal. Equivalently, $\mathcal{O}_{Z,p}$ is a regular local ring. 
Otherwise, $Z$ is \emph{singular at $p$}. 
The variety $Z$ is \emph{smooth}  if it is smooth at each of its points. 

Recall that a variety $Z$ is equidimensional if all its irreducible components
have the same dimension. Algebraically this means that the ideal $I_Z$ is equidimensional, 
that is, all associated primes of $I_Z$ have the same dimension.

\begin{theorem}[Jacobian Criterion]
Let $\bK$ be an algebraically closed field, and let $Z = V(f_1, \dots, f_s)\subset \mathbb A^n_{\bK}$ 
be an affine variety which is equidimensional of dimension $d$.
Write $I_{n-d}\left({\mathcal J}\right)$ for the the ideal generated by the
$(n-d) \times (n-d)$ minors of the  Jacobian matrix
${\mathcal J} = \left({\partial f_i \over \partial x_j}\right)$.
If  $I_{n-d}\left({\mathcal J}\right)+I_Z = \langle 1\rangle$, then $Z$ is smooth,
and the ideal $\langle f_1, \dots, f_s\rangle \subset \bK[x_1,\ldots,x_n]$
is equal to the vanishing ideal $I_Z$ of $Z$.  In particular, $\langle f_1, \dots, f_s\rangle$ 
is a radical ideal.
\end{theorem}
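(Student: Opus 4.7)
The plan is to verify the two claims separately: first, that every $p \in Z$ is a smooth point, by a pointwise rank computation on the Jacobian; then, that the ideals $J := \langle f_1,\dots,f_s\rangle$ and $I_Z$ coincide, by checking their equality after localization at every maximal ideal of $R := \bK[x_1,\dots,x_n]$.

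For smoothness at $p \in Z$, the hypothesis $I_{n-d}(\mathcal{J}) + I_Z = \langle 1\rangle$ forces some $(n-d)\times(n-d)$ minor of $\mathcal{J}$ to be nonzero at $p$, so $\mathcal{J}(p)$ has rank at least $n-d$. Since the differentials $d_p f_i$ cut out $T_p Z$ and have coefficient matrix $\mathcal{J}(p)$, this gives $\dim T_p Z \le d$. Combined with the lower bound $\dim T_p Z \ge d$ coming from equidimensionality, we obtain equality, i.e.\ smoothness at $p$.

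For the ideal equality, work at a maximal ideal $\mathfrak{m}_p \subset R$ corresponding to a point $p \in \mathbb{A}^n_\bK$. If $p \notin Z$, then both $J_{\mathfrak{m}_p}$ and $(I_Z)_{\mathfrak{m}_p}$ are the unit ideal. If $p \in Z$, reorder so that the minor of $\mathcal{J}$ built from $\partial f_i/\partial x_j$, $1 \le i,j \le n-d$, is nonzero at $p$. Then $d_p f_1,\dots,d_p f_{n-d}$ are linearly independent in $\mathfrak{m}_p/\mathfrak{m}_p^2$, so $f_1,\dots,f_{n-d}$ extend to a regular system of parameters for the regular local ring $R_{\mathfrak{m}_p}$ of dimension $n$. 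Hence $A := R_{\mathfrak{m}_p}/\langle f_1,\dots,f_{n-d}\rangle$ is itself a regular local ring of dimension $d$, in particular a domain. The natural surjection $A \twoheadrightarrow R_{\mathfrak{m}_p}/J_{\mathfrak{m}_p}$ has a target of dimension $d$ as well, since $J_{\mathfrak{m}_p}$ and $(I_Z)_{\mathfrak{m}_p}$ have the same radical and $Z$ is equidimensional of dimension $d$ through $p$. In the catenary domain $A$, this dimension match forces the kernel to be zero, so $J_{\mathfrak{m}_p}$ is prime and therefore coincides with $(I_Z)_{\mathfrak{m}_p}$. Collating over all maximal ideals yields $J = I_Z$, which is in particular radical.

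The main obstacle I anticipate is the dimension bookkeeping at the local step: equidimensionality is precisely what guarantees $\dim R_{\mathfrak{m}_p}/J_{\mathfrak{m}_p} = d$ at \emph{every} $p \in Z$. Without it, an embedded or lower-dimensional component through $p$ would break the "domain surjects onto a ring of equal dimension" reasoning that identifies the kernel as zero. Everything else reduces to standard linear algebra in $\mathfrak{m}_p/\mathfrak{m}_p^2$ together with routine properties of regular local rings.
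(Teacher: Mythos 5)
The paper states the Jacobian criterion without proof, treating it as a classical fact (standard references would be Eisenbud's book or Greuel--Pfister, both cited elsewhere in the paper), so there is no paper proof to compare against; I will instead assess your argument on its own. Your proof is correct and is the standard argument: a pointwise rank estimate on the Jacobian for smoothness, and localization plus a regular-sequence/domain argument for the equality $\langle f_1,\dots,f_s\rangle = I_Z$.

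Two small points of precision worth noting, neither of which is a genuine gap. First, in the smoothness step you write that the differentials $d_p f_i$ ``cut out $T_pZ$''; a priori one only knows $\langle f_1,\dots,f_s\rangle\subseteq I_Z$, so what you actually have is the inclusion $T_pZ\subseteq V(d_pf_1,\dots,d_pf_s)$, which gives the inequality $\dim T_pZ\le n-\operatorname{rank}\mathcal{J}(p)\le d$ that you need; equality of those linear spaces is only known a posteriori once $\langle f_1,\dots,f_s\rangle = I_Z$ has been proved. Second, when you invoke catenariness of $A=R_{\mathfrak{m}_p}/\langle f_1,\dots,f_{n-d}\rangle$ to conclude the kernel is zero, what is really being used is the dimension formula $\operatorname{ht}\mathfrak{p}+\dim A/\mathfrak{p}=\dim A$ in the local domain $A$; this does hold because $A$ is regular (hence Cohen--Macaulay, hence catenary), and since $A$ is moreover a domain the minimal prime realizing $\dim A/\mathfrak{a}=d$ must be $\langle 0\rangle$, so $\mathfrak{a}=0$. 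It would be cleaner to cite regularity or Cohen--Macaulayness of $A$ directly rather than catenariness, but the reasoning is sound. Your closing remark correctly isolates where equidimensionality is essential: it guarantees $\dim R_{\mathfrak{m}_p}/(I_Z)_{\mathfrak{m}_p}=d$ at every $p\in Z$, which is what makes the dimension count in $A$ work.
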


If\ $Y\subset Z\subset \mathbb A^n_{\bK}$ are two affine varieties, the vanishing ideal
$I_{Y,Z}$ of $Y$ in $Z$ is the ideal generated by $I_Y$ in $\bK[Z]$. If $Z$ is equidimensional,
we write $\codim_Z Y=\dim Z - \dim Y$ for the codimension of $Y$ in $Z$,
and say that $Y$ is a \emph{complete intersection} in $Z$ if 
$I_{Y,Z}$ can be generated by $\codim_Z Y=\codim I_{Y,Z}$ elements (then  $Y$
and  $I_{Y,Z}$ are equidimensional as well).

\section{A Hybrid smoothness test}\label{sec hybrid smoothness test}

In this section, we present the details of our hybrid smoothness test which, as already
outlined in the introduction, combines the Jacobian criterion with ideas from Hironaka's
landmark paper on the resolution of singularities \cite{Hir} in which Hironaka proved 
that such resolutions exist, provided we work in characteristic zero.

For detecting non-smoothness and controlling the resolution process, Hironaka developed a theory of standard 
bases for local rings and their completions (see \cite[Chapter 1]{GP} for the algorithmic aspects of 
standard bases). Based on this, he defined several invariants controlling the desingularization process.
The  so-called $\nu^{*}$-invariant generalizes the order of a power series. As some sort
of motivation, we recall its definition in the analytic setting: Let $(X,0) \subset({\mathbb{A}}_{\bK}^{n},0)$ 
be an analytic space germ over an algebraically closed field $\bK$ of characteristic zero, let 
$\bK\{x_{1},\ldots,x_{n}\}$ be the ring of convergent power series with coefficients in $\bK$, and
let $I_{X,0} \subset \bK\{x_{1},\ldots,x_{n}\}$ be the defining ideal of $(X,0)$. If
$f_{1},\dots,f_{s}$ form a minimal standard basis of $I_{X,0}$, and the $f_i$ are 
sorted by increasing order $\ord(f_i)$, then set
$$
\nu^{*}(X,0) = (\ord(f_1), \dots, \ord(f_s)).
$$ 
This invariant is the key to Hironaka's termination criterion: 
The germ $(X,0)$ is singular iff at least one of the entries of $\nu^{*}(X,0)$ is $>1$.

In the algebraic setting of this paper, let $X\subset{\mathbb{A}}_{\bK}^{n}$ be an 
equidimensional affine variety, with vanishing ideal $I_X\subset \bK[x_1,\ldots,x_n]$, 
where $\bK$ is an algebraically closed field  of arbitrary characteristic. Working in 
arbitrary characteristic allows for a broader range of potential applications, and is not a problem
since we will only rely on results from Hironaka's papers which also hold in positive characteristic. 

To formulate Hironaka's criterion in the algebraic setting,  we first recall how to extend the notion of order:

\begin{df}
If $(R, \mathfrak{m})$ is any local Noetherian ring, and  $0\neq f\in R$ is
any element, then the \emph{order} of $f$ is defined by setting 
$$
\ord(f)=\max\{k\in \mathbb{N}\mid f\in \mathfrak{m}^k\}.
$$
\end{df}

\begin{df}[\cite{Hir, Hir1967}]  
\label{def:nu-star}
With notation as above, let $p\in X$. If $f_{1},\dots,f_{s}$ form a minimal standard basis of the extended ideal 
$I_X {\mathcal O}_{{\mathbb A}^n_{\bK},p}$ with respect to a local degree ordering, and the $f_i$ are sorted 
by increasing order, set
$$
\nu^{*}(X,p) = (\ord(f_1), \dots, \ord(f_s)).
$$ 
\end{df}
\begin{lem}[\cite{Hir, Hir1967}]
The sequence $\nu^{*}(X,p)$ depends only on $X$ and $p$.
\end{lem}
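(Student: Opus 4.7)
The plan is to identify $\nu^*(X,p)$ with a numerical invariant of an intrinsic object---the tangent cone ideal---thereby removing any dependence on the chosen minimal standard basis and local degree ordering.

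First I would observe that for any local degree ordering $>$ and any nonzero $f \in \mathcal{O}_{\mathbb{A}^n_{\bK}, p}$, the order $\ord(f)$ equals the degree of the leading monomial $\mathrm{lm}_>(f)$. This is immediate from the defining property of a local degree ordering: among the monomials occurring in $f$, the leading one is supported in the smallest $\mathfrak{m}_{\mathbb{A}^n_{\bK}, p}$-adic degree. Hence the tuple $\nu^*(X,p)$ records, in increasing order, the $\mathfrak{m}$-adic degrees of the leading monomials of the basis elements.

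Next I would invoke the standard theory of (local) standard bases (cf.\ \cite[Chapter 1]{GP}): the leading monomials of a minimal standard basis of $I := I_X \mathcal{O}_{\mathbb{A}^n_{\bK}, p}$ form the unique minimal monomial generating system of the leading ideal $L_>(I)$. Since minimal monomial generators of a monomial ideal are uniquely determined, both the length $s$ of the basis and the sorted tuple of orders are independent of the particular choice of minimal standard basis, once the ordering $>$ is fixed.

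To remove the dependence on the ordering, I would pass to the associated graded ring $\mathrm{gr}_{\mathfrak{m}} \mathcal{O}_{\mathbb{A}^n_{\bK}, p} \cong \bK[x_1-p_1,\ldots,x_n-p_n]$ and argue that the initial forms of a minimal standard basis generate the tangent cone ideal $\mathrm{in}_{\mathfrak{m}}(I)$ minimally. Because $\mathrm{in}_{\mathfrak{m}}(I)$ is a homogeneous ideal in a polynomial ring, the graded Nakayama lemma forces the degrees occurring in any minimal homogeneous generating system to be uniquely determined (they are read off from the Hilbert function of $\mathrm{in}_{\mathfrak{m}}(I)/\mathfrak{m}_{\mathrm{gr}}\cdot\mathrm{in}_{\mathfrak{m}}(I)$). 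The tangent cone ideal depends only on $X$ and $p$, so this multiset of degrees, and hence $\nu^*(X,p)$, is intrinsic.

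The main obstacle is the technical step that a \emph{minimal} standard basis gives rise to a \emph{minimal} homogeneous generating set of the tangent cone. To establish this, I would suppose a nontrivial relation $\mathrm{in}_{\mathfrak{m}}(f_1) = \sum_{j\geq 2} g_j \cdot \mathrm{in}_{\mathfrak{m}}(f_j)$ among the initial forms, lift the $g_j$ to elements of $\mathcal{O}_{\mathbb{A}^n_{\bK}, p}$, and run the standard basis division on $f_1 - \sum_{j\geq 2} \tilde g_j f_j$. Tracking the leading monomials through this reduction leads to $\mathrm{lm}_>(f_1) \in \langle \mathrm{lm}_>(f_j) : j\geq 2\rangle$, contradicting minimality of the standard basis.
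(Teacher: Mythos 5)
The paper does not prove this lemma; it simply cites \cite{Hir,Hir1967}, so there is no in-paper argument to compare against. Your overall route is the classical one: tie $\nu^*(X,p)$ to the multiset of degrees in a minimal homogeneous generating system of the tangent cone ideal $\mathrm{in}_{\mathfrak{m}}(I)$, and invoke graded Nakayama to make that multiset intrinsic. However, the step you flag as the ``main obstacle'' is a genuine gap, and the argument you sketch for it does not close it.

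The initial forms of a \emph{minimal standard basis} in the Mora sense (leading monomials are the minimal monomial generators of $L_>(I)$) need not form a \emph{minimal} homogeneous generating set of $\mathrm{in}_{\mathfrak{m}}(I)$; in fact even the number and orders of elements in a minimal Mora standard basis can depend on the chosen local degree ordering. Take $I=\langle x^2-yz,\,y^2-xz\rangle$ at the origin of $\mathbb{A}^3$. Here $\mathrm{in}_{\mathfrak{m}}(I)=I$ (the generators are homogeneous), minimally generated in degrees $(2,2)$, and for the local degree \emph{reverse} lexicographic ordering the set $\{x^2-yz,\,y^2-xz\}$ is already a minimal standard basis, giving $(2,2)$. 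For the local degree \emph{lexicographic} ordering with $x>y>z$, however, the leading monomials are $x^2$ and $xz$; the S-polynomial $z(x^2-yz)+x(y^2-xz)=xy^2-yz^2$ has leading monomial $xy^2\notin\langle x^2,xz\rangle$, so every minimal standard basis acquires a third element of order $3$ (and, continuing, a fourth of order $4$). Your division argument in the final paragraph shows only that a redundant initial form forces $f_1\in\langle f_j:j\geq 2\rangle$ after extracting a unit factor; it does \emph{not} yield $\mathrm{lm}_>(f_1)\in\langle \mathrm{lm}_>(f_j):j\geq 2\rangle$, and the example above shows that this implication can genuinely fail. Hironaka avoids the issue by \emph{defining} a standard basis of $I$ to be a system of elements whose initial forms generate $\mathrm{in}_{\mathfrak{m}}(I)$; with that definition ``minimal standard basis'' means precisely a lift of a minimal homogeneous generating system of the tangent cone, your obstacle disappears by fiat, and the graded Nakayama argument finishes the proof. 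Reconciling this with the algorithmic Mora-style notion (as suggested by Definition~\ref{def:nu-star} and Remark~\ref{rem:comp-nu-star}) requires an extra step --- discarding basis elements whose initial forms are redundant in the tangent cone --- which your sketch currently leaves open.
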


\begin{rem}
\label{rem:comp-nu-star}
Note that $\nu^{*}(X,p)$ can be determined algorithmically:  A minimal standard basis as required  is obtained 
by translating $p$ to the origin and applying Mora's tangent cone algorithm (see \cite{TangentCone}, \cite{GP}). 
\end{rem}

Hironaka's criterion can now be stated as follows: 

\begin{lem}[\cite{Hir}, Chapter III] 
\label{crit-Hir}
The variety $X$ is singular at $p\in X$ iff 
\begin{equation}
\nu^{*}(X,p) >_{\lex} (1,\dots,1)\in\mathbb N^{\operatorname{codim} X},
\end{equation}
where $>_{\lex}$ denotes the lexicographical ordering.
\end{lem}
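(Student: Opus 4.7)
The plan is to translate both the smoothness of $X$ at $p$ and the invariant $\nu^{*}(X,p)$ into statements about the regular local ring $R:=\mathcal{O}_{{\mathbb A}_{\bK}^{n},p}$ with maximal ideal $\mathfrak{m}$ and the ideal $I := I_{X}R$, for which $\mathcal{O}_{X,p}=R/I$. Since $X$ is equidimensional of dimension $d$ and $p\in X$, one has $\dim R/I = d$, so the height of $I$ equals $c:=\codim X$. By the definition of smoothness recalled in Section~\ref{sec:smoothness}, $X$ is smooth at $p$ iff $R/I$ is regular, equivalently iff its embedding dimension $n-\epsilon$ equals $d$, where $\epsilon:=\dim_{\bK}(I+\mathfrak{m}^{2})/\mathfrak{m}^{2}$. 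So the entire proof reduces to establishing the equivalence $\epsilon=c \Leftrightarrow \nu^{*}(X,p)=(1,\ldots,1)\in\mathbb{N}^{c}$.

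First I would show the universal inequality $\epsilon\leq c$: lifting a $\bK$-basis of $(I+\mathfrak{m}^{2})/\mathfrak{m}^{2}$ to $\ell_{1},\ldots,\ell_{\epsilon}\in I$ with linearly independent linear parts, the $\ell_{i}$ extend to a regular system of parameters of $R$, so the ideal $(\ell_{1},\ldots,\ell_{\epsilon})$ is prime of height exactly $\epsilon$, whence $\epsilon\leq\operatorname{ht}(I)=c$. When in fact $\epsilon=c$, I would then argue that $I=(\ell_{1},\ldots,\ell_{c})$: the prime ideal $P:=(\ell_{1},\ldots,\ell_{c})$ is contained in $I$ and has height $c$; by equidimensionality every associated prime of $I$ also has height $c$, and any such prime, containing the height-$c$ prime $P$, must equal $P$; hence $\sqrt{I}=P$, and from $P\subseteq I\subseteq\sqrt{I}=P$ we conclude $I=P$.

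Next I would match $\epsilon$ with the combinatorics of $\nu^{*}(X,p)$. For a minimal standard basis $f_{1},\ldots,f_{s}$ of $I$ with respect to a local degree ordering, the leading monomials form a minimal generating set of the leading ideal $L(I)$. The degree-one part $L(I)_{1}$ is spanned by the images of the order-one elements of $I$ in $\mathfrak{m}/\mathfrak{m}^{2}$, hence has $\bK$-dimension $\epsilon$; therefore exactly $\epsilon$ of the $f_{i}$ have $\ord(f_{i})=1$, and the remaining $f_{i}$ have order $\geq 2$. By Krull's height theorem, $s\geq\mu(I)\geq\operatorname{ht}(I)=c$. Combining this with the previous paragraph, in the case $\epsilon=c$ the minimal standard basis reduces to $\ell_{1},\ldots,\ell_{c}$ (their leading variables already generate $L(I)$), so $\nu^{*}(X,p)=(1,\ldots,1)\in\mathbb{N}^{c}$; in every other case either some entry of $\nu^{*}(X,p)$ exceeds $1$, or $s>c$ and $\nu^{*}(X,p)$ strictly extends $(1,\ldots,1)\in\mathbb{N}^{c}$.

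Putting everything together, $\nu^{*}(X,p)\leq_{\lex}(1,\ldots,1)\in\mathbb{N}^{c}$ iff $\nu^{*}(X,p)=(1,\ldots,1)\in\mathbb{N}^{c}$ iff $\epsilon=c$ iff $X$ is smooth at $p$; the contrapositive is exactly the lemma. I expect the main technical obstacle to be the identification, in the third paragraph, of the number of order-one entries of $\nu^{*}(X,p)$ with the linear-algebraic invariant $\epsilon$: this requires a careful analysis of Mora's normal form for local degree orderings and of the interaction of the leading ideal with the $\mathfrak{m}$-adic filtration, to guarantee that a minimal standard basis really does realise a minimal generating set of the leading ideal degree by degree.
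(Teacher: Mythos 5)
The paper cites this lemma to Hironaka \cite{Hir} and gives no proof of its own, so there is nothing in the text to compare your argument against; I evaluate it on its own merits.

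Your argument is essentially correct and is the natural normalization of the statement. Reducing smoothness at $p$ to the equality $\epsilon = c$ (embedding codimension equals height of $I$) is standard, the chain-of-heights argument showing $\epsilon = c \Rightarrow I = (\ell_1,\ldots,\ell_c)$ is sound---you implicitly use that $I$ is radical, which holds since $I_X$ is radical and localization preserves radicality, so $I=\sqrt I$ and your sandwich $P\subseteq I\subseteq\sqrt I = P$ closes---and the Krull bound $s\geq c$ correctly ensures the lexicographic comparison with $(1,\ldots,1)\in\mathbb N^c$ is well-posed (there is no issue of $\nu^*(X,p)$ being too short). Your final case split is slightly redundant: once $\epsilon<c\leq s$ and the first $\epsilon$ entries are the only $1$'s, the $(\epsilon+1)$-st entry is $\geq 2$ and lies within the first $c$ positions, so ``some entry among the first $\codim X$ exceeds $1$'' always holds and the second alternative you list never arises.

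The gap you flag yourself is the genuine one, and it has two halves. First, that the degree-one part of the leading ideal $L(I)$ under a local degree ordering consists of exactly $\epsilon$ monomials, each necessarily a minimal generator of $L(I)$; this identifies $\epsilon$ with the number of order-one entries in any minimal standard basis and is what makes $\nu^*(X,p)$ well defined (the content of the preceding Lemma in the paper, also cited without proof). Second, that in the case $\epsilon=c$ the elements $\ell_1,\ldots,\ell_c$ with independent linear parts already form a standard basis of $I$, so that $s=c$ and no trailing entries $\geq 2$ can appear; this follows from the product criterion (coprime leading monomials) for Mora's normal form, or from comparing Hilbert--Samuel functions of $\mathcal O_{X,p}$ and of the quotient by $L(I)$. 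Both facts are in \cite{GP}. Once you supply them, the proof is complete, and for the purposes of this paper it is arguably a cleaner reference point than Chapter III of \cite{Hir}.
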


Note that if $X$ is singular at $p$, then the length of $\nu^{*}(X,p)$ may be larger than $\operatorname{codim}(X)$,
but at least one of the first ${\operatorname{codim}(X)}$ entries will be $>1$. 

Hironaka's criterion is not of immediate practical use for us: We cannot examine each single point $p\in X$.
Fortunately, solutions to this problem have been suggested by various authors while establishing
constructive versions of Hironaka's resolution process (see, for example, \cite{BM}, \cite{BEV}, \cite{V1}). Here,
we follow the approach of Bravo, Encinas, and Villamayor \cite{BEV} which is best-suited
for our purposes. Their simplified proof of desingularization replaces
 local standard bases at individual points by the use of loci of maximal order.  These loci are obtained by polynomial computations  
in finitely many charts  (see \cite[Section 4.2]{FK1}). Loci of maximal order can be used to  find so-called hypersurfaces of
maximal contact, which again only exist locally in charts. In a Hironaka style resolution process, hypersurfaces
of maximal contact allow for a descending induction on the dimension of the respective ambient space. 
That such hypersurfaces generally do not exist in positive characteristic is
a key obstacle for extending Hironaka's ideas to positive characteristic.

In our context, we encounter a particularly simple special case of all this.
We suppose that we are given an embedding $X\subset W$, where $W$ is
a smooth complete intersection in ${\mathbb{A}}_{\bK}^{n}$, say
of codimension $r$. In particular, $W$ is equidimensional of dimension 
$$d=n-r.$$ 
The idea is then to first check whether the locus of order at least two is 
non-empty. In this case, $X$ is singular. Otherwise, we can find a finite 
covering of $X$ by affine charts and in each chart a hypersurface of maximal 
contact whose construction relies only on the suitable choice of one of the generators 
of $I_X$ together with one first order partial derivative of this generator\footnote{As a result, the difficulties of resolution of singularities in positive characteristic do 
not occur in our setting, see Lemma \ref{lem max contact} below.}. In each chart,
we then consider the hypersurface of maximal contact as the new ambient space 
of $X$, and proceed by iteration.

The resulting process allows us to decide at each step of the iteration whether
there is a point $p\in X$ such that the next entry of $\nu^{*}(X,p)$ is $\geq 2$.
To give a more precise statement, we suppose that $X$ has positive codimension in $W$
(otherwise, $X$ is necessarily smooth).
Crucial for obtaining information on an individual entry of $\nu^{\ast}$ is the order of ideals:

\begin{df}
If $(R, \mathfrak{m})$ is any local Noetherian ring, and $\langle 0 \rangle\neq J=\langle 
h_1,\dots,h_t\rangle\subset R$ is any ideal, then the \emph{order} of $J$ is defined by setting
$$
\ord(J)=\max\{k\in \mathbb{N}\mid J\subset \mathfrak{m}^k\}
=\operatorname{min}\left \{\operatorname{ord}(h_i) \mid i=1,\dots, t\right\}.
$$
\end{df}

\noindent
In our geometric setup, we apply this as follows:
Given an ideal $\langle 0 \rangle \neq I\subset \bK[W]$ and a point $p\in W$, 
the \emph{order $\ord_p(I)$ of $I$ \emph{at} $p$} is defined to be the order of the extended ideal
$I \mathcal O_{W,p}$. For $0\neq f\in \bK[W]$ we similarly define $\ord_p(f)$ as the order of 
the image of $f$ in $\mathcal O_{W,p}$.

\begin{df}
\label{def:order-gen}
With notation as above, 
for any integer $b \in{\mathbb{N}}$, the \define{locus of order at least $b$} of the vanishing ideal $I_{X,W}$ is 
$$
\Sing(I_{X,W},b)=\left\{  p\in X\mid\operatorname{ord}_{p}(I_{X,W})\geq b\right\}.
$$ 
\end{df}

\begin{rem}[\cite{Hir}, Chapter III] Note that the loci $\Sing(I_{X,W},b)$ are Zariski closed
since the function 
$$X\rightarrow \mathbb N, \ p \mapsto \operatorname{ord}_{p}(I_{X,W}),$$
is Zariski upper semi-continuous.
\end{rem}

\begin{rem}
\label{rem:criterion-singular-I}
With notation as above, let a point $p\in X$ be given. Then the first $r$ elements
of a minimal standard basis of $I_X {\mathcal O}_{{\mathbb A}^n_\bK,p}$ as in Definition 
\ref{def:nu-star} must have order 1 by our assumptions on $W$, that is, the first $r$ entries of $\nu^*(X,p)$ are equal to~$1$. 
On the other hand, if $\operatorname{ord}_{p}(I_{X,W})\geq 2$, then the $(r+1)$-st entry of 
$\nu^*(X,p)$  is $\geq 2$. Hence, in this case, $X$ is singular at $p$
since the codimension of $X$ in $\mathbb{A}_{\bK}^{n}$ is  at least $r+1$ by our assumptions. 
\end{rem}

In terms of loci of order at least two this amounts to:

\begin{lem}
\label{lem: criterion-singular-II}
With notation as above, $X$ is singular if   
$$\Sing(I_{X,W},2)\not=\emptyset.$$
\end{lem}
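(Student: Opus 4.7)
The plan is to observe that this lemma is essentially an immediate consequence of the preceding Remark \ref{rem:criterion-singular-I} combined with Hironaka's criterion (Lemma \ref{crit-Hir}), so the proof reduces to assembling these pieces.

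First I would pick any point $p\in\Sing(I_{X,W},2)$, which exists by the hypothesis that this locus is non-empty. The goal is to show that $X$ is singular at this specific $p$, which suffices for the conclusion that $X$ is singular.

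Next I would invoke the two key facts recorded in Remark \ref{rem:criterion-singular-I}. Since $W\subset\mathbb{A}^n_\bK$ is a smooth complete intersection of codimension $r$, its defining equations can be completed to a minimal standard basis of $I_X\mathcal{O}_{\mathbb{A}^n_\bK,p}$ in such a way that the first $r$ basis elements have order $1$; hence the first $r$ entries of $\nu^*(X,p)$ equal $1$. The second fact is that $\operatorname{ord}_p(I_{X,W})\geq 2$, by the definition of $\Sing(I_{X,W},2)$, forces the $(r+1)$-st entry of $\nu^*(X,p)$ to be at least $2$, since any generator of $I_{X,W}$ lifts to an element of $I_X\mathcal{O}_{\mathbb{A}^n_\bK,p}$ of order $\geq 2$ and must appear (or be reflected) in the standard basis beyond the first $r$ entries.

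Finally, because $X$ has positive codimension in $W$ by the standing assumption, the codimension of $X$ in $\mathbb{A}^n_\bK$ is at least $r+1$, so $\nu^*(X,p)$ has length at least $r+1$ and the comparison against $(1,\dots,1)\in\mathbb{N}^{\operatorname{codim} X}$ in Hironaka's criterion (Lemma \ref{crit-Hir}) is legitimate. Since the first $r$ entries agree with $(1,\dots,1)$ while the $(r+1)$-st strictly exceeds $1$, we obtain $\nu^*(X,p)>_{\lex}(1,\dots,1)$, and Lemma \ref{crit-Hir} yields that $X$ is singular at $p$. The only delicate point, which is already handled in Remark \ref{rem:criterion-singular-I}, is justifying the structure of the first $r+1$ entries of $\nu^*(X,p)$ from the embedding $X\subset W$; everything else is a direct application of previous results.
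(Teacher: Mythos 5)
Your proof is correct and follows the paper's own route exactly: the paper cites Remark \ref{rem:criterion-singular-I}, which is precisely the observation you unpack (first $r$ entries of $\nu^*(X,p)$ equal $1$ from the smooth complete-intersection assumption on $W$, the $(r+1)$-st entry is $\geq 2$ from $\operatorname{ord}_p(I_{X,W})\geq 2$, and $\operatorname{codim} X\geq r+1$), combined with Hironaka's criterion Lemma \ref{crit-Hir}. You have merely spelled out the steps that the paper compresses into ``Clear from Remark \ref{rem:criterion-singular-I}.''
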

\begin{proof}
Clear from Remark \ref{rem:criterion-singular-I}.
\end{proof}

To determine the loci   $\Sing(I_{X,W},b)$ in a Zariski neighbourhood
of a point~$p\in X$ explicitly, derivatives with respect 
to a regular system of parameters of $W$ at $p$ are the method of choice:
See \cite[p. 404]{BEV} for characteristic zero, and \cite[Sections 2.5 and 2.6]{GiraudPos}
for positive characteristic using Hasse derivatives.
For a more detailed description, fix a point $p\in W$. According to our assumptions, the local 
ring $\mathcal{O}_{W,p}$ is regular of dimension $d$. So we can find 
a regular system of parameters $X_{p,1},\dots, X_{p,d}$ for $\mathcal{O}_{W,p}$. That is, 
$X_{p,1},\dots, X_{p,d}$  form a minimal set of generators for $\mathfrak{m}_{W,p}$.
By the Cohen structure theorem, we may, thus, think of the completion 
$\widehat{\mathcal O_{W,p}}$ as a formal power series ring in $d$ variables
(see \cite[Proposition 10.16]{Eis}): The map
$$
\Phi:\bK[[y_1, \dots, y_d]] \rightarrow \widehat{\mathcal O_{W,p}}, \ y_i\mapsto X_{p,i},
$$
is an isomorphism of local rings. In particular, 
the order of an element $f\in \bK[W]$ at $p$ coincides with the order of the
formal power series $\Phi^{-1}(f)\in \bK[[y_1, \dots, y_d]]$. The latter, in turn, 
can be computed as follows:

\begin{lem}[\cite{BEV}, \cite{GiraudPos}] 
Let $R=\bK[[y_1,\dots,y_d]]$, let ${\mathfrak m}=\langle y_1,\dots,y_d \rangle$
be the maximal ideal of $R$, and  let $F\in R \setminus \{0\}$. Then 
$$\operatorname{ord}(F)= \operatorname{min}\left \{m \in {\mathbb N} \bigm| 
\frac{\partial^{a} F}{\partial y^{a}}\not\in {\mathfrak m}   \textrm{ for some } a \in {\mathbb N}^n
 \textrm{ with } |a|=m \right \}, 
$$ 
where the derivatives denote the usual formal derivatives in characteristic zero, and 
Hasse derivatives in positive characteristic.
\end{lem}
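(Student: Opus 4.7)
The plan is to verify the formula by directly computing the constant coefficient (i.e.\ the image modulo $\mathfrak{m}$) of $\partial^a F/\partial y^a$, respectively $D^a F$, for each multi-index $a \in \mathbb{N}^d$. Write $F = \sum_{\beta \in \mathbb{N}^d} c_\beta y^\beta$ with $c_\beta \in \bK$, and set $k = \ord(F)$. By definition of the order on the power series ring $R$, one has $c_\beta = 0$ whenever $|\beta| < k$, while at least one $c_\beta$ with $|\beta| = k$ is nonzero. Moreover, an element $G \in R$ lies in $\mathfrak{m}$ if and only if its constant coefficient vanishes, so the task reduces to recognizing when this constant coefficient is nonzero.

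The heart of the argument is a term-by-term computation. In characteristic zero, one has $\partial^a y^\beta/\partial y^a = \tfrac{\beta!}{(\beta-a)!}\, y^{\beta-a}$ when $\beta_i \geq a_i$ for every $i$, and $0$ otherwise. Only the index $\beta$ with $\beta - a = 0$, that is $\beta = a$, contributes to the constant coefficient of $\partial^a F/\partial y^a$, which therefore equals $a! \, c_a$. Since $a! \neq 0$ in characteristic zero, this is nonzero precisely when $c_a \neq 0$. In positive characteristic this reasoning breaks down because $a!$ may vanish (for instance $\partial^p(y_i^p)/\partial y_i^p = p! = 0$); this is exactly the reason one passes to Hasse derivatives, normalized so that $D^a y^\beta = \binom{\beta}{a} y^{\beta-a}$. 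The same bookkeeping then yields the constant coefficient $\binom{a}{a}\, c_a = c_a$ for $D^a F$, which is again nonzero precisely when $c_a \neq 0$.

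With this in hand, both inequalities are immediate. If $|a| < k$, then every $\beta$ with $c_\beta \neq 0$ satisfies $|\beta| \geq k > |a|$, so in particular $c_a = 0$ and the derivative lies in $\mathfrak{m}$; this forces the minimum on the right-hand side to be at least $k$. Conversely, by the choice of $k$ there exists $a$ with $|a| = k$ and $c_a \neq 0$, and for this $a$ the term-by-term computation shows that $\partial^a F/\partial y^a$ (respectively $D^a F$) does not lie in $\mathfrak{m}$, giving the matching upper bound.

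The only genuine point to watch is the characteristic dependence in the term-by-term step: in positive characteristic ordinary derivatives simply fail to detect leading monomials supported in $p$-th powers, so one must work with Hasse derivatives from the outset. Once this replacement is made, the proof collapses to a short bookkeeping argument about the constant coefficient of a power series, and no further difficulty arises.
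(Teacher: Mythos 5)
Your proof is correct. The paper does not supply its own argument for this lemma, deferring instead to \cite{BEV} and \cite{GiraudPos}, so there is no in-paper proof to compare against; your term-by-term computation of the constant coefficient of $\partial^a F/\partial y^a$ (respectively the Hasse derivative $D^a F$) is the standard, self-contained way to establish the equality, correctly identifying $a! \, c_a$ (respectively $c_a$) as the value modulo $\mathfrak m$ and thereby reducing the claim to the definition $\ord(F) = \min\{|\beta| : c_\beta \neq 0\}$. The one point worth stating explicitly is that formal differentiation of power series is legitimate term-by-term because each coefficient of the output depends on only finitely many coefficients of the input, but this is routine and does not affect the correctness of the argument.
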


As we focus on the locus $\Sing(I_{X,W},b)$ with $b=2$, only first order formal derivatives play 
a role for us. Since these  derivatives coincide with the first order Hasse derivatives, we do not need to discuss 
Hasse derivatives here.

\begin{df}\label{def deriv}
In the situation above, we use the isomorphism $\Phi$ of the Cohen structure theorem to define 
\emph{first order derivatives} of elements $f\in\widehat{\mathcal O_{W,p}}$ 
\emph{with respect to the regular system of parameters} $X_{p,1},\dots, X_{p,d}$:
Set
$$
\frac{\partial f}{\partial X_{p,j}}=\Phi\hspace{-1mm}\left(\frac{\partial \Phi^{-1}(f)}{\partial y_{j}}\right)\in\widehat{\mathcal O_{W,p}}, \text{ for } j=1,\dots, d.
$$ 
\end{df}

We summarize our discussion so far. If $I_{X,W}$ is given by a set of generators
$f_{r+1}, \dots, f_s\in\bK[W]\setminus \{0\}$, and if $p\in X$, then $p\in\Sing(I_{X,W},2)$  iff $\operatorname{ord}_p(f_j)>1$
for all $j$. In this case, $X$ is singular at $p$. Furthermore, if $0\neq f\in \bK[W]$ is any element, $p\in W$ is any point,
and $X_{p,1},\dots, X_{p,d}$ is a regular system of parameters for $\mathcal{O}_{W,p}$, then $\operatorname{ord}_p(f)>1$ iff  
\begin{equation}
\label{not:Delta-local}
1 \not\in \Delta_p(f) :=\left\langle f, \frac{\partial f}{\partial X_{p,1}},\dots,
\frac{\partial f}{\partial X_{p,d}} \right\rangle_{\widehat{\mathcal{O}_{W,p}}}\subset \widehat{\mathcal{O}_{W,p}}.
\end{equation}
Now, as before, we cannot examine each point individually.  The following arguments will
allow us to remedy this situation in Lemma \ref{locus order 2} below. We begin by showing 
that there is a locally consistent way of choosing regular systems of parameters:

\begin{lem}\label{lem covering by minors} 
Let $I_W=\langle f_1,\dots,f_r \rangle \subset \bK[x_1,\dots,x_n]$,
and let ${\mathcal J} = \left({\partial f_i \over \partial x_j}\right)$ be the Jacobian matrix of 
$f_1,\dots,f_r$. Then there is a finite covering of $W$ by principal 
open subsets $D(h)\subset \mathbb{A}_\bK^n$ such that:
\begin{enumerate}
\item Each polynomial $h$ is a maximal minor of ${\mathcal J}$.
\item For each $h$, the variables $x_j$ not used for differentiation in forming
the minor $h$ induce by translation a regular system of parameters for every local ring  
${\mathcal O_{W,p}}$, $p\in W\cap D(h)$.
\end{enumerate}
For each $h$, we refer to such a choice of a local system of parameters at all points of $W\cap D(h)$ as a \emph{consistent choice}.
\end{lem}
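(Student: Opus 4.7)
The plan is to combine the Jacobian criterion (applied to $W$ itself) with the standard observation that, at a point where a maximal minor of the Jacobian does not vanish, the complementary variables already carry a regular system of parameters.

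First, since $W$ is a smooth equidimensional complete intersection of dimension $d=n-r$, its singular locus is cut out by $I_r(\mathcal{J})+I_W$, where $I_r(\mathcal{J})$ denotes the ideal generated by the $r\times r$ minors of $\mathcal{J}$. Because $W$ is smooth and $\bK$ is algebraically closed, this singular locus is empty, so by the Jacobian criterion (the converse direction, via Hilbert's Nullstellensatz) one has $I_r(\mathcal{J})+I_W=\langle 1 \rangle$ in $\bK[x_1,\ldots,x_n]$. Writing a representation of $1$ in terms of the finitely many maximal minors $h_1,\ldots,h_N$ of $\mathcal{J}$ and elements of $I_W$ yields $\sum_\nu g_\nu h_\nu \equiv 1 \pmod{I_W}$. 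In particular, the $h_\nu$ have no common zero on $W$, so the principal open sets $D(h_\nu)$ cover $W$, which takes care of (1).

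Next, fix one such minor $h=\det\bigl(\partial f_i/\partial x_{j_k}\bigr)_{1\le i,k\le r}$ corresponding to an $r$-subset $J=\{j_1,\ldots,j_r\}\subset\{1,\ldots,n\}$, and let $\{i_1,\ldots,i_d\}=\{1,\ldots,n\}\setminus J$ be the complementary indices. For any $p=(p_1,\ldots,p_n)\in W\cap D(h)$, the translated complementary variables are $X_{p,\ell}:=x_{i_\ell}-p_{i_\ell}$ for $\ell=1,\ldots,d$. The claim is that these form a regular system of parameters of $\mathcal{O}_{W,p}$. Since $\dim\mathcal{O}_{W,p}=d$, Nakayama's lemma reduces this to showing that the classes of the $X_{p,\ell}$ span $\mathfrak{m}_{W,p}/\mathfrak{m}_{W,p}^2$.

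The key computation takes place in the cotangent space of $\mathbb A^n_\bK$ at $p$, i.e.\ the $n$-dimensional $\bK$-vector space with basis $\{x_j-p_j : 1\le j\le n\}$. Its quotient by the images of $f_1,\ldots,f_r$ is canonically isomorphic to $\mathfrak{m}_{W,p}/\mathfrak{m}_{W,p}^2$. The image of $f_i$ is its differential $d_p f_i=\sum_j (\partial f_i/\partial x_j)(p)(x_j-p_j)$, and the $r\times r$ submatrix of coefficients of the $d_p f_i$ on the columns indexed by $J$ is precisely the evaluation at $p$ of the matrix whose determinant is $h$. As $h(p)\neq 0$, this submatrix is invertible, and a block-triangular argument shows that $\{d_p f_1,\ldots,d_p f_r, X_{p,1},\ldots,X_{p,d}\}$ is a $\bK$-basis of $\mathfrak{m}_{\mathbb A^n_\bK,p}/\mathfrak{m}_{\mathbb A^n_\bK,p}^2$. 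Passing to the quotient by the $d_p f_i$ then shows that the $X_{p,\ell}$ span $\mathfrak{m}_{W,p}/\mathfrak{m}_{W,p}^2$, which establishes (2).

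The main difficulty is essentially bookkeeping: one must verify that the \emph{same} index set $J$ (and hence the same $d$ translated coordinates) works uniformly at every point of $W\cap D(h)$, which is exactly the content of the consistency statement. This is automatic from the proof, since the only hypothesis used on $p$ was $h(p)\neq 0$, a condition holding throughout $W\cap D(h)$ by definition.
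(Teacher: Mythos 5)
Your proof is correct and takes essentially the same route as the paper's: the heart of both arguments is the pointwise nonvanishing of some $r\times r$ minor of the Jacobian on $W$ (smoothness plus the Jacobian criterion), followed by the observation that at any $p\in W\cap D(h)$ the differentials $d_pf_1,\dots,d_pf_r$ together with the complementary translated variables form a $\bK$-basis of $\mathfrak m_{\mathbb A^n_\bK,p}/\mathfrak m_{\mathbb A^n_\bK,p}^2$, so that Nakayama's lemma delivers a regular system of parameters for $\mathcal O_{W,p}$ after killing $f_1,\dots,f_r$. The only (cosmetic) divergence is how you extract a \emph{finite} covering: you use the Nullstellensatz to get $1\in I_r(\mathcal J)+I_W$ and read off finitely many minors from a representation of $1$, whereas the paper simply invokes quasi-compactness of $W$ in the Zariski topology after the local argument; the two are interchangeable and of comparable length.
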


\begin{proof}
Consider a point $p_0\in W$. Then, by the  Jacobian criterion,
there is at least one minor $h=\det(M)$ of ${\mathcal J}$ of size $r$ such that
$h(p_0) \neq 0$ (recall that we assume that $W$ is smooth). Suppose for simplicity that $h$ involves the last $r$ columns of
${\mathcal J}$, and let $p=(a_1,\dots,a_n)$ be any point of $W\cap D(h)$. Then the  images of 
$x_{1}-a_{1}, \ldots,x_d-a_d, f_1,\ldots,f_r$  in ${\mathcal O _{{\mathbb A}^n_\bK,p}}$ are actually contained in 
${\mathfrak m}_{{\mathbb A}^n_\bK,p}$ and represent a $\bK$-basis of the Zariski tangent space
${\mathfrak m}_{{\mathbb A}^n_\bK,p}/{\mathfrak m}_{{\mathbb A}^n_\bK,p}^2$. 
Hence, by Nakayama's lemma, they form a minimal set of generators for ${\mathfrak m}_{{\mathbb A}^n_\bK,p}$. 
Since $f_1,\dots,f_r$ are mapped to zero when we pass to ${\mathcal O}_{W,p}$, the images of  
$x_{1}-a_{1}, \ldots,x_d-a_d$ in ${\mathcal O}_{W,p}$ form a regular system of parameters for 
${\mathcal O}_{W,p}$. The result follows because $W$ is quasi-compact in the Zariski topology.
\end{proof}

\begin{notation}
\label{notation:pos}
For further considerations, we retain the notation of the lemma and its proof. Fix one principal open subset
$D(h) \subset {\mathbb A}^n_\bK$ as in the lemma. 
Suppose that $h=\det(M)$ involves the last $r$ columns of  the Jacobian matrix ${\mathcal J}$. 
Furthermore, fix one element $0\neq f\in\bK[W]$.
\end{notation}
\noindent
We now show how to find an ideal $\Delta(f) \subset {\mathcal O}_{W}(W\cap D(h))$ such that
$$\Delta(f)\;\! \widehat{\mathcal{O}_{W,p}}=\Delta_p(f) \;\text{ for each point }\; p\in W\cap D(h),$$
where $\Delta_p(f)$ is defined as in \eqref{not:Delta-local}. Technically, we manipulate polynomials, 
starting from a polynomial in $\bK[x_1,\dots,x_n]$ representing $f$. By abuse of notation, we denote 
this polynomial again by $f$. 
\begin{construction}\label{con deriv}
We construct a polynomial $\tilde{f}\in \bK[x_1,\dots, x_n]$ whose image 
in ${\mathcal O}_{W}(W\cap D(h))$ coincides with that of $f$, and whose partial derivatives 
$\frac{\partial \tilde{f}}{\partial x_i}$, $i=d+1,\dots, n$, are mapped to zero in 
${\mathcal O}_{W}(W\cap D(h))/\langle f \rangle$.
For this, let $A$ be the matrix of cofactors of $M$. Then
$$A \cdot M = h \cdot E_{r},$$
where $E_r$ is the $r\times r$ identity matrix. Moreover, if $I\subset\bK[x_1,\dots,x_n]$ is the ideal
generated by the entries of  the vector $(\tilde{f_1},\dots,\tilde{f}_{r})^T =A \cdot (f_1,\dots,f_{r})^T$, then the extended ideals  
$I\mathcal{O}_{{\mathbb A}^n_\bK}(D(h))$ and $I_W\mathcal{O}_{{\mathbb A}^n_\bK}(D(h))$ coincide
since $h$ is a unit in $\mathcal{O}_{{\mathbb A}^n_\bK}(D(h))$.

Let $\widetilde{\mathcal J}=\left({\partial {\tilde{f_i}} \over \partial {x_j}}\right)$ be the Jacobian matrix of 
$\tilde{f_1},\dots,\tilde{f}_{r}$. Then the matrix $\widetilde{{\mathcal J}}\vert_{W\cap D(h)}$ obtained 
by mapping the entries of $\widetilde{{\mathcal J}}$ to $\mathcal{O}_W (W\cap D(h))$ is of type
$$\widetilde{{\mathcal J}}\vert_{W\cap D(h)} = \begin{pmatrix} \hspace{2mm}* & \mid &  h \cdot E_{r}\end{pmatrix}$$
(apply the product rule).
In $\mathcal{O}_{{\mathbb A}^n_\bK}(D(h))$, the polynomial $\hat{f}=h\cdot f$ represents the same class as  
$f$. Moreover, modulo $f$, each partial 
derivative of $\hat{f}$  is divisible by $h$. Hence, after suitable row operations, the partial derivatives in the right hand lower block 
of the Jacobian matrix of $\tilde{f}_1,\dots, \tilde{f}_{r}, \hat{f}$ are mapped to zero in ${\mathcal O}_W (W\cap D(h))/\langle f\rangle$:

\[\scalebox{0.8}{
$\left(
\begin{tabular}
[c]{ccc|ccc}%
&&& $h$ &   &$0$   \\
& $\ast$ && & $\ddots$ \\
& &&$0$&  & $h$ \\\hline \rule{0pt}{1.2\normalbaselineskip}
$\frac{\partial \hat{f}}{\partial x_1}$& $\dots$ &
          $ \frac{\partial \hat{f}}{\partial x_{d}}$ & $ \frac{\partial \hat{f}}{\partial x_{d+1}}$& $\dots$ &
          $ \frac{\partial \hat{f}}{\partial x_{n}}$%
\end{tabular}
\right)  \longmapsto\left(
\begin{tabular}
[c]{ccc|ccc}%
 & &&$h$ &  & $0$ \\
 &$\ast$& && $\ddots$ & \\
& &&$0$&  & $h$ \\\hline \rule{0pt}{1.1\normalbaselineskip}
$H_1$& $\dots$ &
          $H_{d}$ &$0$& $\dots$ &
          $0$  %
\end{tabular}\right)$ }%
\]
\vskip0.2cm

\noindent
The row operations correspond to subtracting $\bK[x_1,\dots,x_n]$-linear 
combinations of $\tilde{f_1},\dots,\tilde{f}_{r}$ from $\hat{f}$. In this way, we get a polynomial 
$\tilde{f}$ as desired: The images of $\tilde{f}$ and $f$ in ${\mathcal O}_{W}(W\cap D(h))$ coincide,
and for $i=d+1,\dots, n$, the $\frac{\partial \tilde{f}}{\partial x_i}$ are
mapped to zero in ${\mathcal O}_{W}(W\cap D(h))/\langle f \rangle$.
In fact, we have
\begin{equation}
\label{ref:def-hs}
(\frac{\partial \tilde{f}}{\partial x_1},\ldots,\frac{\partial \tilde{f}}{\partial x_n})
=(H_1,\ldots,H_d,0,\ldots,0)
\end{equation}
as an equality over  ${\mathcal O}_W (W\cap D(h))/\langle f\rangle$.
\end{construction}

\begin{lem}
\label{chain rule} 
With notation as above, consider the extended ideal 
$$
\Delta(f) =\langle f, H_1,\dots, H_d \rangle {\mathcal O}_{W}(W\cap D(h)).
$$
Then 
$$\Delta(f)\;\! \widehat{\mathcal{O}_{W,p}}=\Delta_p(f) \;\text{ for each point }\; p\in W\cap D(h).$$
\end{lem}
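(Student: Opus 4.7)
The plan is to prove the stronger pointwise congruence
\[
H_j \;\equiv\; \frac{\partial f}{\partial X_{p,j}} \pmod{f\,\widehat{\mathcal{O}_{W,p}}}
\]
for each $j=1,\dots,d$ and each $p\in W\cap D(h)$, from which the asserted equality of ideals follows at once, since both sides already contain $f$ and hence differ only through their remaining generators.

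First I would pin down the identification behind Definition \ref{def deriv}. Fix $p=(a_1,\dots,a_n)\in W\cap D(h)$. By the proof of Lemma \ref{lem covering by minors}, with $h$ built from the last $r$ columns of $\mathcal{J}$, the classes of $x_1-a_1,\dots,x_d-a_d$ in $\mathcal{O}_{W,p}$ form a regular system of parameters, and I would take these as $X_{p,1},\dots,X_{p,d}$. Because the minor $M$ is a unit at $p$, the formal implicit function theorem (equivalently, Hensel's lemma for the complete local ring $\widehat{\mathcal{O}_{\mathbb{A}^n_\bK,p}}$) yields power series $\varphi_{d+1},\dots,\varphi_n\in \bK[[y_1,\dots,y_d]]$ with no constant term such that $f_i(a_1+y_1,\dots,a_d+y_d,\,a_{d+1}+\varphi_{d+1},\dots,a_n+\varphi_n)=0$ for $i=1,\dots,r$. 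The Cohen isomorphism $\Phi$ is then induced by the substitution $x_j\mapsto a_j+y_j$ for $j\leq d$ and $x_k\mapsto a_k+\varphi_k$ for $k>d$; this description is the bridge from the polynomial derivatives $H_j$ to the completion-theoretic derivatives $\partial f/\partial X_{p,j}$.

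The heart of the argument is a chain-rule computation. Since $f$ and $\tilde{f}$ have equal images in $\mathcal{O}_W(W\cap D(h))$, they also have equal images in $\widehat{\mathcal{O}_{W,p}}$, so $\Phi^{-1}(f)=\Phi^{-1}(\tilde{f})$. Applying $\partial/\partial y_j$ through the substitution that defines $\Phi^{-1}$ gives
\[
\frac{\partial \Phi^{-1}(\tilde{f})}{\partial y_j}
\;=\;
\Phi^{-1}\!\left(\frac{\partial \tilde{f}}{\partial x_j}\right)
\,+\,\sum_{k=d+1}^{n}\Phi^{-1}\!\left(\frac{\partial \tilde{f}}{\partial x_k}\right)\cdot\frac{\partial \varphi_k}{\partial y_j}.
\]
By \eqref{ref:def-hs} of Construction \ref{con deriv}, the $\partial \tilde{f}/\partial x_j$ with $j\leq d$ equal $H_j$ in $\mathcal{O}_W(W\cap D(h))/\langle f\rangle$, while the $\partial \tilde{f}/\partial x_k$ with $k>d$ vanish there. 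Lifting these congruences along the inclusion $\mathcal{O}_W(W\cap D(h))\hookrightarrow \widehat{\mathcal{O}_{W,p}}$ and applying $\Phi$ kills the sum and identifies $\partial f/\partial X_{p,j}$ with $H_j$ modulo $f\,\widehat{\mathcal{O}_{W,p}}$, which is exactly the congruence we wanted.

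The main obstacle I expect is the bookkeeping needed to identify $\Phi$ with the substitution map coming from the implicit function theorem, in arbitrary characteristic: one must check both that Hensel's lemma applies to the system $f_1=\dots=f_r=0$ at $p$ (which it does, since the relevant submatrix of $\mathcal{J}$ is a unit there) and that the resulting substitution map, followed by the quotient to $\widehat{\mathcal{O}_{W,p}}$, really coincides with the map $y_i\mapsto X_{p,i}$ fixed in Definition \ref{def deriv}. Once this is settled, the chain rule step is purely formal and the proof is complete.
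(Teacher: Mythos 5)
Your proposal is correct and follows essentially the same route as the paper: both reduce the ideal equality to the pointwise congruence $H_j \equiv \partial f/\partial X_{p,j} \pmod{f\,\widehat{\mathcal{O}_{W,p}}}$, realize $\Phi^{-1}\circ\Psi$ as the substitution $x_j\mapsto a_j+y_j$ ($j\le d$), $x_k\mapsto a_k+\varphi_k(y)$ ($k>d$), and then invoke the chain rule together with Equation \eqref{ref:def-hs} to kill the sum over $k>d$. The only cosmetic difference is that you make the implicit-function/Hensel step explicit where the paper simply writes down the inverse isomorphism $\Phi^{-1}$ with power series $m_i(y)$; this is a fair expansion of a step the paper takes for granted.
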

\begin{proof}
Let a point $p=(a_1,\dots, a_n)\in W \cap D(h)$ be given.  Write
$\boldsymbol{x-a}=\{x_1-a_1, \dots, x_n-a_n\}$ and $\boldsymbol x=\{x_1, \dots, x_n\}$. Then
$$\widehat{{\mathcal O}_{W,p}} \cong
\bK[[\boldsymbol{x-a}]]/I_{W}\bK[[\boldsymbol{x-a}]],$$ 
and the natural map 
$$\Psi:  \bK[\boldsymbol x]\longrightarrow \bK[[\boldsymbol{x-a}]]\longrightarrow  
\bK[[\boldsymbol{x-a}]]/I_{W} \bK[[\boldsymbol{x-a}]]$$
factors through the inclusion 
$\bK[W] \rightarrow \widehat{{\mathcal O}_{W,p}}$. Moreover, by our assumptions in 
Notation \ref{notation:pos}, the isomorphism of the Cohen structure theorem reads
\begin{eqnarray*}
\bK[[y_1,\dots,y_d]] & \overset{\Phi}{\longrightarrow} & \bK[[\boldsymbol{x-a}]]/I_{W}\bK[[\boldsymbol{x-a}]],\\
 y_i         & \longmapsto & x_i-a_i. \;\; 
\end{eqnarray*}
The inverse isomorphism $\Phi^{-1}$ is of type
\begin{eqnarray*}
 y_i         & \longleftarrow\!\shortmid & x_i-a_i\;\; \text{ if } 1 \leq i \leq d,\\
 m_i(y_{1},\dots,y_{d}) & \longleftarrow\!\shortmid & x_i-a_i\;\;
                                          \text{ if } d+1 \leq i \leq n.
\end{eqnarray*}
Then ${\Phi^{-1}}\circ \quad \!\!\!\! \Psi$ is the map 
$$g\mapsto g(\boldsymbol {y+a}', m(\boldsymbol{y})+\boldsymbol{a}'')),$$
where $\boldsymbol{a}'=\{a_1,\dots, a_d\}$, $\boldsymbol{a}''
=\{a_{d+1},\dots, a_n\}$, and $\boldsymbol y=\{y_1, \dots, y_d\}$.
Hence, for each $g\in \bK[\boldsymbol x]$, the vector of partial derivatives
$$
\begin{pmatrix} \frac{\partial \Phi^{-1}(\Psi(g))}{\partial y_1}, & \dots, &
                   \frac{\partial \Phi^{-1}(\Psi(g))}{\partial y_d}
   \end{pmatrix}
$$
is obtained as the product
$$
\scalebox{0.9}{ $\begin{pmatrix} \frac{\partial g}{\partial x_1}
              (\boldsymbol {y+a}', m(\boldsymbol{y})+\boldsymbol{a}''), & \dots, &
                   \frac{\partial g}{\partial x_n}
               (\boldsymbol {y+a}', m(\boldsymbol{y})+\boldsymbol{a}'')
   \end{pmatrix} \cdot
   \begin{pmatrix} & E_d & \smallskip \\ \hline \rule{0pt}{1.1\normalbaselineskip}
                   \frac{\partial m_{d+1}}{\partial y_1} & \dots &
                   \frac{\partial m_{d+1}}{\partial y_d} \\ \vdots && \vdots  \\
                  \frac{\partial m_n}{\partial y_1} & \dots &
                   \frac{\partial m_n}{\partial y_d} 
  \end{pmatrix}$}
$$
(apply the chain rule). 
Taking $g=\tilde{f}$ with $\tilde{f}$ as in Construction \ref{con deriv}, 
we deduce from Equation \eqref{ref:def-hs}  that
$$
\frac{\partial \Phi^{-1}(\Psi(\tilde{f}))}{\partial y_j} = \Phi^{-1}(\Psi(H_j))
$$
as an equality over $\bK[[y_1,\ldots,y_d]]/\langle \Phi^{-1}(\Psi(f))\rangle$, for $j=1,\dots, d$. The
result follows by applying $\Phi$ since $\Psi(\tilde{f})=\Psi(f)$ by the very construction of $\tilde{f}$.
\end{proof}

\begin{notation}
In the situation of Lemma \ref{chain rule}, motivated by the lemma and its proof, we write
$$\frac{\partial f}{\partial X_{j}}:=H_j\in \bK[x_1,\dots,x_n], \;\text{ for }\; j=1,\dots, d.$$
\end{notation}

Summing up, we get:

\begin{lem}
\label{locus order 2}
Let $f_{r+1},\dots,f_s\in \bK[x_1,\dots, x_n]$ represent a set of generators for the vanishing ideal
$I_{X,W}$. Then $\Sing(I_{X,W},2)\cap D(h) $ is the locus
\[
V \left(I_X+\left\langle \frac{\partial f_{i}}{\partial X_{j}}\;\!  \middle| \;\! r+1 \leq i \leq s,
 1 \leq j \leq d \right\rangle \right) \cap D(h)
\]
which is computable by the recipe given in Construction \ref{con deriv}.
\end{lem}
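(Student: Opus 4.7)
The plan is to show, for an arbitrary point $p\in W\cap D(h)$, the equivalence
\[
p\in\Sing(I_{X,W},2) \;\Longleftrightarrow\; p\in V\bigl(I_X+\langle \partial f_i/\partial X_j\rangle\bigr),
\]
and then conclude by intersecting with $D(h)$. Since both sides are contained in $X\cap D(h)$, I will assume throughout that $p\in X\cap D(h)$ so that $f_i(p)=0$ for $i=r+1,\dots,s$ automatically.

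First I would reduce the order condition on the ideal to an order condition on the generators. By definition of the order of an ideal in the local ring $\mathcal{O}_{W,p}$, we have
\[
\ord_p(I_{X,W}) = \min\{\ord_p(f_i)\mid r+1\le i\le s\},
\]
so $p\in\Sing(I_{X,W},2)$ iff $\ord_p(f_i)\ge 2$ for every $i$. This reduces the problem to a single generator~$f=f_i$ at a time.

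Next I would invoke the characterization established just before Lemma \ref{chain rule}: for any $0\neq f\in\bK[W]$ and any $p\in W$, one has $\ord_p(f)>1$ if and only if $1\notin \Delta_p(f)$, equivalently $\Delta_p(f)\subset \mathfrak{m}_{W,p}\widehat{\mathcal{O}_{W,p}}$. Applying Lemma \ref{chain rule}, the local ideal $\Delta_p(f)$ equals the extension $\Delta(f)\widehat{\mathcal{O}_{W,p}}$, where $\Delta(f)=\langle f,H_1,\dots,H_d\rangle$ and $H_j=\partial f/\partial X_j$ in the notation introduced after Lemma \ref{chain rule}. Containment of this extended ideal in the maximal ideal is equivalent to containment of the generators themselves, so
\[
\ord_p(f)\ge 2 \;\Longleftrightarrow\; f(p)=0 \text{ and } \tfrac{\partial f}{\partial X_j}(p)=0 \text{ for } j=1,\dots,d.
\]
For $p\in X$ the condition $f_i(p)=0$ is automatic, and collecting over all $i$ yields exactly that $p$ lies in the vanishing locus of $I_X+\langle \partial f_i/\partial X_j\mid r+1\le i\le s,\ 1\le j\le d\rangle$.

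No step looks genuinely hard: all of the nontrivial content (the identification of $\Delta_p(f)$ with an honest polynomial ideal, and the reduction of order via partial derivatives with respect to a regular system of parameters) has already been packaged in Construction \ref{con deriv} and Lemma \ref{chain rule}. The only delicate point to verify carefully is that $\ord_p$ of an ideal is the minimum of $\ord_p$ on a generating set, so that testing the condition on the $f_i$ suffices; this is immediate from the definition $\ord(J)=\max\{k\mid J\subset\mathfrak{m}^k\}$ together with the fact that $J\subset\mathfrak{m}^k$ iff each generator lies in $\mathfrak{m}^k$. The computability assertion then follows because Construction \ref{con deriv} produces the polynomials $H_j=\partial f_i/\partial X_j$ explicitly from $f_i$ and the cofactor matrix of $M$.
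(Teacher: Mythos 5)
Your proof is correct and follows essentially the same route as the paper, which presents this lemma as a "summing up" of the preceding discussion: the pointwise reduction of $\ord_p(I_{X,W})\ge 2$ to $\ord_p(f_i)\ge 2$ for all generators, the characterization $\ord_p(f)>1\iff 1\notin\Delta_p(f)$, and the identification $\Delta_p(f)=\Delta(f)\widehat{\mathcal{O}_{W,p}}$ from Lemma \ref{chain rule} are exactly the ingredients the paper assembles. You merely spell out the final step (properness of the extended ideal $\iff$ each generator vanishes at $p$) in slightly more detail.
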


If the intersection of $\Sing(I_{X,W},2)$ with one principal open set from a covering as in Lemma 
\ref{def deriv} is non-empty, then $X$ is singular by Lemma \ref{lem: criterion-singular-II}, and
our smoothness test terminates. If all these intersections are empty we
iterate our process:

\begin{lem}
[\cite{BEV}]\label{lem max contact} 
Let $f_{r+1},\dots,f_{s}\in{\mathbb{K}}[x_{1},\dots,x_{n}]$ represent a set of generators 
for the vanishing ideal $I_{X,W}$.  Retaining Notation \ref{notation:pos},
suppose that $\Sing(I_{X,W},2)\cap D(h)=\emptyset.$
Then there is a finite covering  of $X\cap D(h)$ by principal open subsets
of type $D(h\cdot g)\subset\mathbb{A}_{\bK}^{n}$ such that:
\begin{enumerate}
\item Each polynomial $g$ is a derivative $\frac{\partial f_{i}}{\partial X_{j}}$ of some $f_{i}$,
$r+1\leq i \leq n$.
\item If we set $W^{\prime}=V(f_{1},\dots,f_{r},f_{i})\subset \mathbb A^n_{\bK}$, then 
$W'\cap D(h\cdot g)$ is a smooth complete intersection of codimension $r+1$ in $D(h\cdot g)$.
\item We have $X\cap D(h\cdot g)\subset W^{\prime}\cap D(h\cdot g)$.
\end{enumerate}
\end{lem}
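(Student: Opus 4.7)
The plan is to read the desired cover directly off the finitely many polynomials $g_{ij} := \partial f_i/\partial X_j$ with $r+1 \le i \le s$ and $1 \le j \le d$ produced by Construction \ref{con deriv}, and then to verify the three conditions using the Jacobian criterion pointwise.

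The first step uses the hypothesis $\Sing(I_{X,W},2) \cap D(h) = \emptyset$ together with Lemma \ref{locus order 2}, which force
\[
V\!\left(I_X + \bigl\langle g_{ij} \mid r+1 \le i \le s,\ 1 \le j \le d \bigr\rangle\right) \cap D(h) = \emptyset.
\]
Equivalently, for every $p \in X \cap D(h)$ there is a pair $(i,j)$ with $g_{ij}(p) \ne 0$, so the finite family $\{D(h \cdot g_{ij})\}_{i,j}$ already covers $X \cap D(h)$. This establishes (1) and the covering property. Condition (3) is then immediate: $X \subset W = V(f_1,\dots,f_r)$ and any polynomial representative of $f_i \in I_{X,W}$ vanishes on $X$, so $X \subset V(f_1,\dots,f_r,f_i) = W'$, and intersecting with $D(h \cdot g_{ij})$ gives the desired inclusion.

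The real content lies in (2). To verify it, I would fix one pair $(i,j)$ and an arbitrary point $p \in W' \cap D(h \cdot g_{ij})$. Since $h(p) \ne 0$, Lemma \ref{lem covering by minors} guarantees that $W$ is smooth of codimension $r$ at $p$; in particular $f_1,\dots,f_r$ have linearly independent images in $\mathfrak m_{\mathbb A^n_\bK,p}/\mathfrak m_{\mathbb A^n_\bK,p}^2$, and the variables not used in $h$ induce a regular system of parameters for $\mathcal O_{W,p}$. Because $g_{ij}$ represents $\partial f_i/\partial X_{p,j}$ in $\bK[W]$ by Construction \ref{con deriv} and Lemma \ref{chain rule}, the combination of $g_{ij}(p) \ne 0$ with $f_i(p) = 0$ forces $f_i \in \mathfrak m_{W,p} \setminus \mathfrak m_{W,p}^2$. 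The image of $f_i$ in $\mathfrak m_{\mathbb A^n_\bK,p}/\mathfrak m_{\mathbb A^n_\bK,p}^2$ therefore does not lie in the subspace spanned by the images of $f_1,\dots,f_r$, so $f_1,\dots,f_r,f_i$ are linearly independent in that cotangent space. The Jacobian criterion then certifies that $W'$ is smooth of codimension $r+1$ at $p$, yielding (2) once $p$ ranges over $W' \cap D(h \cdot g_{ij})$.

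The step I expect to require the most care is this middle one: translating the nonvanishing of the explicit polynomial $g_{ij}$ into the intrinsic statement $f_i \notin \mathfrak m_{W,p}^2$. This is precisely what Construction \ref{con deriv} and Lemma \ref{chain rule} were designed to supply, so the argument will largely amount to invoking them carefully rather than redoing any local or completion-level bookkeeping.
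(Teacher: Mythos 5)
Your proof follows the same route as the paper's: read off the covering from the nonvanishing loci of the derivatives $g_{ij}=\partial f_i/\partial X_j$, observe that (1) and (3) are immediate, and establish (2) by showing $\ord_p(f_i)=1$ at every $p\in W'\cap D(h\cdot g_{ij})$. The difference is cosmetic at the level of (2): where the paper notes $\nu^*(W',p)=(1,\dots,1)$ and invokes Hironaka's criterion (Lemma \ref{crit-Hir}) to conclude smoothness, you unfold this into the equivalent statement that $f_1,\dots,f_r,f_i$ have linearly independent images in the cotangent space $\mathfrak m_{\mathbb A^n_\bK,p}/\mathfrak m_{\mathbb A^n_\bK,p}^2$. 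Both versions are valid and rest on the same core observation.

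There is, however, a small gap at the very end. You write that ``the Jacobian criterion then certifies that $W'$ is smooth of codimension $r+1$ at $p$,'' but the Jacobian criterion as stated in the paper applies to a variety already known to be equidimensional of a given dimension $d$, and you have not yet established that $W'\cap D(h\cdot g_{ij})$ has codimension $r+1$. Your cotangent computation only bounds the tangent space: $\dim T_pW'\leq n-r-1$. To close the loop you still need the opposite bound $\dim_p W'\geq n-r-1$, which does not come for free; it is exactly Krull's principal ideal theorem applied to the image of $f_i$ in the regular (hence domain) local ring $\mathcal O_{W,p}$, as spelled out in the paper's item (b). Since $f_i$ vanishes on the nonempty set $X\cap D(h\cdot g_{ij})$, its image in $\mathcal O_{W,p}$ is a nonunit, and since $W'\cap D(h\cdot g_{ij})$ certainly contains points of $X$, $f_i$ does not vanish identically there either, so Krull gives codimension exactly one in $W$, hence $r+1$ in $\mathbb A^n_\bK$. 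Once that is in place, your tangent-space bound forces equality and smoothness. So the proposal is essentially correct, but you should not cite the Jacobian criterion as a black box here; you need the dimension lower bound from Krull first.
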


\begin{proof}
Let $p_0\in X \cap D(h)$. Then, since $\Sing(I_{X,W},2)\cap D(h)=\emptyset$ 
by assumption, the order of $f_{i}$ is $\ord_{p_0}(f_{i})=1$ for at least one $i\in\{r+1,\dots,s\}$.
Equivalently, one of the partial derivatives of $f_i$, say $\frac{\partial f_{i}}{\partial X_{j}}$,
does not vanish at $p_0$. Then, if we set $g=\frac{\partial f_{i}}{\partial X_{j}}$ and
$W^{\prime}=V(f_{1},\dots,f_{r},f_{i})$, properties  (1) and (3) of the lemma are clear by construction.
With regard to (2), again by construction, we have $\ord_{p}(f_{i})=1$ for each $p\in D(h\cdot g)$.
This implies for each $p\in D(h\cdot g)$:
\begin{itemize}
\item[(a)] We have $\nu^*(W',p) = (1,..,1,1)\in {\mathbb N}^{r+1}$.
\item[(b)] The image of $f_i$ in $\mathcal O_{W,p}$ is a non-zero non-unit.
\end{itemize}
Then, $W'\cap D(h\cdot g)$ is smooth by (a) and Hironaka's Criterion
\ref{crit-Hir}. Furthermore, each local ring $\mathcal O_{W,p}$ is regular and, thus, 
an integral domain. Hence, by (b) and Krull's principal ideal theorem, the ideal
generated by the image of $f_i$ in $\mathcal O_{W,p}$ has codimension 1.
We conclude that $W'\cap D(h\cdot g)$ is a complete intersection of codimension 
$r+1$ in $D(h\cdot g)$.

The result follows since the Zariski topology is quasi-compact.
\end{proof}

\begin{rem}
\label{rem-get-rad-ideal}
In the situation of the proof above, Hironaka's criterion actually
allows us to conclude that the affine scheme $$\Spec \left(\mathcal O_{\mathbb A^n_{\bK}}(D(h\cdot g))/
\langle f_1,\dots, f_{r},f_{i}\rangle O_{\mathbb A^n_{\bK}}(D(h\cdot g))\right)$$
is smooth. In particular, $\langle f_1,\dots, f_{r},f_{i}\rangle  O_{\mathbb A^n_{\bK}}(D(h\cdot g))$
is a radical ideal.
\end{rem}

\begin{rem}
At each iteration step of our process, we start from embeddings of type $X\cap D(q)\subset
W\cap D(q)\subset \mathbb A^n_{\bK}$ rather than from an embedding $X\subset 
W\subset \mathbb A^n_{\bK}$. This is not a problem: When we use
the trick of Rabinowitch to regard $X\cap D(q)\subset W\cap D(q)$ as affine varieties 
in $\mathbb A^{n+1}_{\bK}$, and apply Lemma~\ref{lem covering by minors} in 
$\mathbb A^{n+1}_{\bK}$, the extra variable in play will not 
appear in the local systems of parameters.
\end{rem}

\begin{rem}[The Role of the Ground Field]
\label{rem:k-versus-K}
Our algorithms essentially rely on Gr\"obner basis techniques (and not, for example, on polynomial factorization). 
While the geometric interpretation of what we do is concerned with an algebraically closed
field $\bK$, the algorithms will be applied  to ideals which are defined over a 
subfield $\bk\subset \bK$ whose arithmetic can be handled by a computer. This makes
sense since any  Gr\"obner basis of an ideal $J\subset \bk[x_1,\dots, x_n]$ is also a Gr\"obner 
basis of the extended ideal $J^e=J\bK[x_1,\dots, x_n]$. Indeed, if $J$ is given by generators with coefficients 
in $\bk$, all computations in Buchberger's Gr\"obner basis algorithm are carried through over $\bk$. 
In particular, if a property of ideals can be checked using Gr\"obner bases, then $J$ has this 
property iff $J^e$ has this property. For example, if $J$ is equidimensional, then $J^e$ is 
equidimensional as well. Or, if the condition asked by the Jacobian criterion is fulfilled for $J$,
then it is also fulfilled for $J^e$.

The standard reference for theoretical results on extending the ground field is \cite[VII, \S 11]{ZSCA}.
To give another example, if $\bk$ is perfect, and $J$ is a radical ideal, then $J^e$ is a radical ideal, too. 

\end{rem}

\begin{notation}
In what follows, $\bk\subset \bK$ always denotes a field extension with $\bk$ perfect
and $\bK$ algebraically closed. If $I\subset \bk[\boldsymbol x] =\bk[x_1,\dots, x_n]$ is an ideal, then $V(I)$
stands for the vanishing locus of $I$ in $\mathbb A^n(\bK)$. Similarly, if
$q\in  \bk[\boldsymbol x]$, then $D(q)$ stands for the principal open set  defined by $q$
in $\mathbb A^n(\bK)$.
\end{notation}

We are now ready to specify the smoothness test. We start from ideals 
$$I_W =\langle f_1,\dots, f_r\rangle\subset I_X =\langle f_1,\dots, f_s\rangle\subset \bk[\boldsymbol x]$$
and a polynomial $q \in \bk[\boldsymbol x]$ such that 
\begin{itemize}
\item[\phantom{$(\diamondsuit)$}]\quad $I_X$ is equidimensional and radical,
\item[$(\diamondsuit)$]\quad $I_W\;\!\mathcal O_{\mathbb A^n_{\bK}}(D(q))$ is a radical ideal of codimension $r$,
\item[\phantom{$(\diamondsuit)$}]\quad $V(I_W)\cap D(q)$ is smooth.
\end{itemize}
Note that under these conditions, $V(I_W)\cap D(q)\subset D(q)$ is a complete intersection.
The overall structure of our algorithm then consists of the following main components:

\begin{enumerate}[leftmargin=8mm]
\item \emph{Covering by principal open sets}. Find a set $L$ of $r\times r$ submatrices $M$ of the Jacobian matrix 
of $f_1,\dots,f_r$ such that all minors $\det(M)$ are non-zero, and such that%
\[
q \in \sqrt{ \left\langle f_{1},\ldots,f_{r}\right\rangle+ \left\langle\det(M)\mid M\in L\right\rangle}.%
\]

\item \emph{Local system of parameters}. By Lemma \ref{lem covering by minors}, for each $M\in L$, there 
is a consistent choice of a regular system of parameters associated to $M$. Without loss of 
generality, we can assume that $M$ involves the variables $x_{d+1},\ldots x_n$. 
Then we may choose the regular system of parameters to be induced by $x_{1},\ldots,x_d$.

\item \emph{Derivatives relative to the local system of parameters.} Find the matrix of cofactors $A$ of $M$ with $A\cdot M=\det(M)\cdot E_{r}$ and let%
\[
\widehat{F}:=\left(
\begin{array}
[c]{c}%
\tilde{f}_{1}\\
\vdots\\
\tilde{f}_{s}\\
\hat{f}_{s+1}\\
\vdots\\
\hat{f}_{r}%
\end{array}
\right)  =%
\begin{pmatrix}
A & 0\\
0 & \det(M)\cdot E_{s-r}%
\end{pmatrix}
\cdot\left(
\begin{array}
[c]{c}%
f_{1}\\
\vdots\\
f_{s}%
\end{array}
\right) \ .
\]
By Lemma \ref{locus order 2}, the locus of order $\geq2$ is empty if and only~if%
\[
q\in\sqrt{\left\langle f_{1},\ldots,f_{s},\partial f_{i}/\partial X_{j}\mid
i,j>r\right\rangle },%
\]
where, by Construction \ref{con deriv}, the ideal of the derivatives $\partial f_{i}/\partial X_{j}$ is generated by the entries of the left
lower block of the Jacobian matrix of $\widehat{F}$ after the row reduction\\%

\[\scalebox{0.85}{ 
$\mathcal{J}(\widehat{F})=\left(
\begin{tabular}
[c]{c|ccc}
& $\det(M)$ &  & $0$\\
$\ast$ &  & $\ddots$ & \\
& $0$ &  & $\det(M)$\\\hline
$\ast$ &  & $\ast$ &
\end{tabular}
\right)  \mapsto\left(
\begin{tabular}
[c]{c|ccc}
& $\det(M)$ &  & $0$\\
$\ast$ &  & $\ddots$ & \\
& $0$ &  & $\det(M)$\\\hline
$\ast$ & $0$ & $\cdots$ & $0$%
\end{tabular}
\right)$
}\]
\\

\item \emph{Descent in codimension of $X$.} Consider a representation%
\[
q^{m}=\sum\alpha_{i,j}\cdot\partial f_{i}/\partial X_{j}\operatorname{mod}%
\left\langle f_{1},\ldots,f_{s}\right\rangle. %
\]
Suppose $\alpha_{i,j}\neq0$ and $\partial f_{i}/\partial X_{j}\neq 0$. Then by Lemma \ref{lem max contact}, we can pass to a new variety $W' = W\cap V(f_i)\supset X$
and a new principal open set $D(q')$ with $$q' = q\cdot h\cdot\partial f_{i}/\partial X_{j},$$ such that $W'\cap D(q')\subset D(q')$ is a smooth complete intersection of codimension $r+1$.
In this way, we obtain a covering of $X\cap D(q)$ by principal open sets $D(q')$ with $X\cap D(q')\subset W'\cap D(q')$ and iterate.\medskip
\end{enumerate}

Algorithm \ref{alg smoothnesstest} \texttt{HybridSmoothnessTest} collects
the main steps of the smoothness test. 
It calls Algorithm \ref{alg deltacheck} \texttt{DeltaCheck} 
to check whether $\Sing(I_{X,W},2) \cap D(q) \not= \emptyset$. In this case, it 
returns \texttt{false} and terminates. Otherwise, it  calls 
Algorithm \ref{alg descendembsmooth} \texttt{DescentEmbeddingSmooth} which implements
Lemma \ref{lem max contact}. The next step is to recursively apply the \texttt{HybridSmoothnessTest} 
in the resulting embedded situations.
If the codimension reaches a specified value, the algorithm invokes a relative version of the Jacobian 
criterion by calling Algorithm \ref{alg embJac} \texttt{EmbeddedJacobian}. 

\begin{algorithm}[h]
\caption{\texttt{HybridSmoothnessTest}}
\label{alg smoothnesstest}
\begin{algorithmic}[1]
\REQUIRE   Ideals $I_W =\langle f_1,\dots, f_r\rangle\subset I_X =\langle f_1,\dots, f_s\rangle\subset \bk[\boldsymbol x]$
and a polynomial $q \in \bk[\boldsymbol x]$ such that $(\diamondsuit)$ holds; a non-negative integer $c$.
\ENSURE  {\texttt true} if $V(I_X \cap D(q))$ is smooth, {\texttt false} otherwise.\\
\IF {$\operatorname{dim}(I_W)-\operatorname{dim}(I_X)$ = 0}
\RETURN {\texttt true}
\ENDIF
\IF {$\operatorname{dim}(I_W)-\operatorname{dim}(I_X) \leq c$}
\RETURN {\texttt{EmbeddedJacobian}($I_W$,$I_X$)}
\ENDIF
\IF {{\bf not} $\operatorname{\texttt{DeltaCheck}}(I_W,I_X,q)$}
\RETURN {\texttt false}
\ENDIF\label{Jac1}
\STATE $L=\operatorname{\texttt{DescentEmbeddingSmooth}}(I_W,I_X,q)$\label{Jac2}
\FORALL {($I_{W'},I_{X},q')\in L$}
\IF {{\bf not} $\texttt{HybridSmoothnessTest}(I_{W'},I_{X},q',c)$ }
\RETURN {\texttt false}
\ENDIF
\ENDFOR
\RETURN {\texttt true}
\end{algorithmic}
\end{algorithm}

\begin{algorithm}[h]
\caption{\texttt{DeltaCheck} for an affine chart}
\label{alg deltacheck}
\begin{algorithmic}[1]
\REQUIRE   Ideals $I_W =\langle f_1,\dots, f_r\rangle\subset I_X =\langle f_1,\dots, f_s\rangle\subset \bk[\boldsymbol x]$ 
and a polynomial $q \in \bk[\boldsymbol x]$ such that $(\diamondsuit)$ holds.

\ENSURE  {\texttt true} if $\Sing(I_{X,W},2) \cap D(q) = \emptyset$, {\texttt false} otherwise.\smallskip \\

\hspace{-0.5cm}\hbox{\{ \emph{First handle the case $I_W=\langle 0\rangle$, $q=1$; then
$x_1,\dots,x_n$  induce \}}} \\ \hspace{-0.5cm}\hbox{\{ \emph{a local system of parameters at every point of $W$ \}}}
\IF {$I_W=\langle 0\rangle$ {\bf and }$q=1$}
\IF {$1 \in \langle f_1,\dots,f_s, \frac{\partial f_1}{\partial x_1},\dots, \frac{\partial f_s}{\partial x_n}\rangle$}
\RETURN {\texttt true}
\ELSE
\RETURN {\texttt false}
\ENDIF
\ENDIF

\hspace{-0.5cm}\hbox{\{\emph{ Initialization \}}}
\STATE $Q=\langle 0\rangle$
\STATE $L_1  = \{ r\times r-{\rm submatrices\; } M \;{\rm of }\;\operatorname{Jac}(I_W)\; \mid \operatorname{det}(M)\neq0 \operatorname{mod} I_X\}$

\hspace{-0.5cm}\hbox{\{\emph{ Main Loop: Cover by complements of minors \}}}
\WHILE {$L_1\neq \emptyset$ {\bf and} $q \not\in Q$}\label{line while delta}
\STATE choose $M \in L_1$
\STATE $L_1= L_1 \setminus \{M\}$
\STATE $q_{new}=\operatorname{det}(M)$
\STATE $Q=Q+\langle q_{new}\rangle$
\STATE compute the $r\times r$ cofactor matrix $A$ with
$$A \cdot M = q_{new} \cdot \operatorname{Id}_{r}$$

\hspace{-1.cm}\hbox{\{\emph{ Test $\Sing(I_{X,W},2) \subset V(q_{new}) \cup V(q)$ \}}}
\STATE $C_M= I_X +  J_X$, where\\
$J_X  =  \left\langle q_{new} \cdot \frac{\partial f_i}{\partial x_j}
-\sum\limits_{k \text{ column of }M \atop
l\text{ row of } M}
\frac{\partial f_l}{\partial x_j} A_{lk}
\frac{\partial f_i}{\partial x_k} \hspace{2mm}\middle|\hspace{2mm}
 \substack{ r+1 \leq i \leq s\smallskip \\j \text{ not a column of } M} \right\rangle$
\label{line derivatives}
\IF {$q_{new}\cdot q \not\in \sqrt{C_M}$}
\RETURN \texttt false
\ENDIF
\ENDWHILE
\RETURN {\texttt true}
\end{algorithmic}
\end{algorithm}

\begin{rem}
Modifying the approach discussed in the components (3) and (4) above, Algorithm \ref{alg descendembsmooth} computes 
the products $h \cdot \frac{\partial f_i}{\partial X_j}$ 
directly as appropriate $(r+1) \times (r+1)$ minors of the Jacobian matrix in step 5 of Algorithm \ref{alg descendembsmooth}. 
This exploits the well-known fact that 
subtracting multiples of one row from another one as in Gaussian elimination does not change the determinant of a square matrix 
-- or in our case the maximal minors 
of the $(r+1) \times n$ matrix.

In practical applications, it can be useful to first check whether there is an $ r \times r$ minor, say $N$, of the Jacobian matrix 
of $I_W$ which divides $q$. If this is the case, we can restrict in step 6 of Algorithm \ref{alg descendembsmooth} to those minors in $I_{min,i}$ which involve $N$, since these minors already form a generating system of~$I_{min,i}$.

Combining these two 
modifications provides an important enhancement with regard to efficiency of the hybrid smoothness test as presented in \cite{smoothtst}.
\end{rem}

\begin{algorithm}[h]
\caption{\texttt{DescentEmbeddingSmooth}}
\label{alg descendembsmooth}
\begin{algorithmic}[1]
\REQUIRE   Ideals $I_W =\langle f_1,\dots, f_r\rangle\subset I_X =\langle f_1,\dots, f_s\rangle\subset \bk[\boldsymbol x]$ 
and a polynomial $q \in \bk[\boldsymbol x]$  such that $(\diamondsuit)$ and $D(q)\cap \operatorname{Sing}(I_X,2)=\emptyset$ hold.

\ENSURE Triples $(I_{W_i},I_{X}, q_i)$ 
such that $I_{W_i}\subset I_{X}$ together with $q_i$ satisfy $(\diamondsuit)$, and such that
              $V(I_X) \cap D(q) \subset \bigcup_i (V(I_{W_i}) \cap D(q_i))$.  

\smallskip
\hspace{-0.5cm}\hbox{\{ \emph{Direct descent: no need to find an open covering of $V(I_X)\cap D(q)$} \}}
\IF { $\Sing(I_{V(f_i),W},2) \cap  D(q) = \emptyset$ and $q \notin \sqrt{\langle f_1,\ldots,f_r,f_i \rangle} $ for some $i \in \{r+1,\ldots,n\}$}

        \STATE $I_{W_1}=\langle f_1,\dots,f_r,f_i\rangle$
        \RETURN \{($I_{W_1}$,$I_{X}$,$q$)\}
     \ENDIF

\hspace{-0.5cm}\hbox{\{ \emph{Descent by constructing an open covering of $V(I_X)\cap D(q)$} \}}
     \FOR {$i \in \{r+1,\ldots,s\}$}
         \STATE $I_{min,i}=$ $\langle (r+1)\times (r+1)$ minors of the Jacobian matrix of
                $f_1,\ldots,f_r,f_i\rangle$\\
      \ENDFOR \smallskip
      \STATE \label{line cover dec} from among the generators of the ideals
             $I_{min,1},\ldots,I_{min,s}$, find minors $h_1,\ldots,h_t \neq 0 \operatorname{mod} I_X$ such that
             $q \in \sqrt{I_X+\langle h_1,\ldots,h_t \rangle}$   \\
      \STATE fix $i_1,\ldots,i_t \in \{r+1,\ldots s\}$
             with $h_j \in I_{min,i_j}$\\
      \FOR {$j=1,\ldots,t$}
          \STATE $I_{W_j}=\langle f_1,\ldots,f_r,f_{i_j} \rangle$
      \ENDFOR
\RETURN $\{(I_{W_1},I_{X},q\cdot h_1),\ldots,(I_{W_t},I_{X},q\cdot h_t)\}$

\end{algorithmic}
\end{algorithm}

\begin{algorithm}[h]
\caption{\texttt{EmbeddedJacobian}}
\label{alg embJac}
\begin{algorithmic}[1]
\REQUIRE   Ideals $I_W =\langle f_1,\dots, f_r\rangle\subset I_X =\langle f_1,\dots, f_s\rangle\subset \bk[\boldsymbol x]$ 
and a polynomial $q \in \bk[\boldsymbol x]$ such that $(\diamondsuit)$ holds.

\ENSURE  {\texttt true} if $V(I_X)\cap D(q)$ is smooth, {\texttt false} otherwise.\\
\STATE $Q=\langle 0\rangle$
\STATE \label{line Jac M} $L = \{ r\times r-{\rm submatrices\; } M \;{\rm of }\;\operatorname{Jac}(I_W)\; \mid \operatorname{det}(M)\neq0\operatorname{mod} I_X\}$

\hspace{-0.5cm}\hbox{\{\emph{ Read off regular system of parameters for non-trivial $h$ \}}}
\IF {$\operatorname{det}(M)\! \mid \! q$ for some $M \in L $}
\STATE delete all other elements from $L$
\ENDIF

\hspace{-0.5cm}\hbox{\{\emph{ Covering by complements of the minors \}}}
\WHILE {$L\neq \emptyset$ {\bf and} $q \not\in Q$}\label{line while jac}
\STATE choose $M \in L$
\STATE $L= L \setminus \{M\}$
\STATE \label{line det jac}$q_{new}=\operatorname{det}(M)$
\STATE $Q=Q+\langle q_{new}\rangle$
\STATE \label{line adj jac}compute the $r\times r$ cofactor matrix $A$ with
$$A \cdot M = q_{new} \cdot \operatorname E_{r}$$

\hspace{-1.0cm}\hbox{\{\emph{ Jacobian matrix of $I_X$ w.r.t. local system of parameters for $I_W$ \}}}
\STATE $Jac =
\left(q_{new} \cdot \frac{\partial f_i}{\partial x_j}
-\sum\limits_{k \text{ a column of }M \atop
l\text{ a row of } M}
\frac{\partial g_l}{\partial x_j} A_{l,k}
\frac{\partial f_i}{\partial x_k} \right)\in \bk[\boldsymbol x]^{(s-r)\times (n-r)}$\\
where $r+1\leq i\leq s$ and $j$ is not a column of $M$ \medskip
\STATE $c=\codim\left(V(I_X)\cap D(q)\subset V(I_W)\cap D(q)\right)$
\STATE $J = I_X + I_{m}$, where
       $I_{m}=\langle
       c\times c-$minors of $Jac$ $\rangle$
\IF {$q_{new} \cdot q \not\in \sqrt{J}$}\label{line false jac}
\RETURN  \texttt false
\ENDIF
\ENDWHILE
\RETURN {\texttt true}
\end{algorithmic}
\end{algorithm}

\begin{rem} 
In explicit experiments, we typically arrive at an ideal $I_X$ by using a specialized construction method
which is based on geometric considerations. From these considerations, some properties of $I_X$ 
might be already known. For example, it might be clear that $V(I_X)$ is irreducible. Then there
is no need to check the equidimensionality of $I_X$. 
If we apply the algorithm without testing
whether $I_X$ is radical, and the algorithm returns {\tt true}, then $I_X$ must be radical.
This is clear from the Jacobian criterion and the fact that Hironaka's criterion checks smoothness 
in the scheme theoretical sense (see Remark \ref{rem-get-rad-ideal}).
\end{rem}

\begin{rem} 
If we do not have some specific pair $(I_W,q)$ in mind, we can always start Algorithm \ref{alg smoothnesstest} with $(I_W,q)=(\langle 0 \rangle, 1)$. In this case the algorithm determines smoothness of whole affine variety $V(I_X)\subset \mathbb A^n(\bK)$.
\end{rem}

\section{Petri Nets and the GPI-Space environment}\label{sec GPI}

\subsection{GPI-Space and Task-Based Parallelization} 
GPI-Space \cite{GPI} is a task-based workflow management system for high performance environments. It is based on David Gelernter's 
approach of separating coordination and computation \cite{gelernter} which leads to the explicit visibility of dependencies, 
and is beneficial in many aspects. We illustrate this by discussing some of the concepts realized in GPI-Space:

\vskip0.1cm
\begin{compactitem}[leftmargin=4mm]
\item The coordination layer of GPI-Space uses a separate, specialized language, namely Petri nets \cite{petri},
which leaves optimization and rewriting of coordination activities to experts for data 
management rather than bothering experts for computations in a particular domain of 
application (such as algebraic geometry) with these things. 
\item Complex environments remain hidden from the domain experts and are managed automatically. This includes automatic parallelization, 
automatic cost optimized data transfers and latency hiding, automatic adaptation to dynamic changes in the environment, and resilience.
\item Domain experts can use and mix arbitrary implementations of their algorithmic solutions. For the experiments done for this paper, 
GPI-Space manages several (many) instances of {\sc Singular} and, at the same time, other code written in \verb!C++!.
\item The use of virtual memory allows one not only to scale applications beyond the limitations imposed by a single machine, but also to
couple legacy applications that normally can only work together by writing and reading files. Also, the switch between low latency, low capacity memory (like DRAM) and high latency, high capacity memory (like a parallel file system) can be done without changing the application.
\item Optimization goals like ``minimal time to solution'', ``maximum throughput'', or ``minimal energy consumption'' are achieved independently 
from the domain experts' implementation of their core algorithmic solutions.
\item Patterns that occur in the management of several applications are explicitly available and can be reused. Vice versa, 
computational core routines can be reused in different management schemes. Optimization on either side is beneficial 
for all applications that use the respective building blocks. 
\end{compactitem}

\vskip0.1cm
\noindent
GPI-Space consists of three main components:
\begin{compactitem}[leftmargin=4mm]
\item A distributed, resilient, and scalable runtime system for huge dynamic environments that is responsible for managing the available resources, 
specifically the memory resources and the computational resources. 
The scheduler of the runtime system assigns activities to resources with respect to both the needs 
of the current computations and the overall optimization goals.
\item A Petri net based workflow engine that manages the full application state and is responsible for automatic parallelization and dependency tracking.
\item A virtual memory manager that allows different activities and/or external programs to communicate and share partial results. 
The asynchronous data transfers are managed by the runtime system rather than the application itself, and synchronization is done in a
way that aims at hiding latency.
\end{compactitem}

\vskip0.1cm
Of course, the above ideas are not exclusive to GPI-Space -- many other systems exist that follow similar strategies. In the last few years, 
task-based programming models are getting much attention in the field of high performance computing. They are realized in systems such as
OmpSs \cite{ompss}, StarPU \cite{starpu}, and PaRSEC \cite{parsec}. All these systems have in common that they do explicit data 
management and optimization in favor of their client applications. The differences are in their choice of the coordination language, in 
their choice of the user interface, and in their choice of how general or how specific they are.
It is widely believed that task-based systems are a promising approach to program the current and upcoming very large and very complex 
super computers in order to enable domain experts to get a significant
fraction of the theoretical peak performance \cite{dongarra, sppexa}.

As far as we know, there have not yet been any attempts to use systems originating from high performance numerical simulation
in the context of computational algebraic geometry, where the main workhorse is Buchberger's algorithm for computing Gr\"obner bases. 
Although this algorithm performs well in many practical examples of interest, its worst case complexity is doubly exponential 
in the number of variables \cite{MM}. This seems to suggest that  algorithms in computational algebraic 
geometry are too unpredictable in their time and memory consumption for the successful integration 
into task-based systems. However, numerical simulation also encounters problems of unpredictability, and there is
already plenty of knowledge on how to manage imbalances imposed by machine jitter or different 
sizes of work packages. For example, numerical state of the art code to compute flows makes use of mesh adaptation.
This creates great and unpredictable imbalances in computational effort which are addressed on the fly
by the respective simulation framework. 

The high performance computing community aims for energy efficient computing, just because the machines they are using are so 
big that it would be too expensive to not make use of acquired resources. One key factor to achieve good efficiency is perfect load 
balancing. Another important topic in high performance computing is the non-intrusive usage of legacy code. GPI-Space is not only  
able to automatically balance, to automatically scale up to huge machines, or to tolerate machine failures, it can also use existing 
legacy applications and integrate them, without requiring any change to them. This turned out to be the great door opener for 
integrating {\sc Singular}  into GPI-Space. In fact, in our applications, GPI-Space manages several (many) instances of {\sc Singular} in its existing 
binary form (without any need for changes). 

Our first experience indicates that the 
tools used in high performance computing are mature, both, in terms of operations and in 
terms of capabilities to manage complex applications from symbolic computation.
We therefore believe that it is the right time to apply these tools to domains such as computational algebraic geometry.

In GPI-Space, the coordination language is based on Petri nets, which are known to be a good choice because 
of their graphical nature, their locality (no global state), their concurrency (no events, just dependencies), and 
their reversibility (recomputation in case of failure is possible) \cite{aalst}. Incidentally, these are all properties 
that Petri intentionally borrowed from physics for the use in computer science \cite{brauer}. Moreover, Petri nets 
share many properties with functional languages, especially their well-known advantages of modularity and 
direct correspondence to algebraic structures, which qualify them as both powerful and user friendly 
\cite{backus, hughes}.

The following section describes in more detail what Petri nets are and why they are a good choice to describe dependencies.

\subsection{Petri Nets}\label{sec petrinet}

In 1962, Carl Adam Petri proposed  a formalism to describe concurrent asynchronous systems \cite{petri}. 
His goal was to describe systems that allow for adding resources to running computations 
without requiring a global synchronization, and he discovered an elegant solution that connects resources with other resources
only locally. Petri nets are particularly interesting since they have the  following properties:
\begin{compactitem}
\item They are graphical (hence intuitive) and hierarchical (so that applications can be decomposed into building blocks that are Petri nets themselves).
\item They are well-suited for concurrent environments since there are no events that require a (total) ordering. Instead,
Petri nets are state-based and describe at any point in time the complete state of the application. That locality (of dependencies)
also allows one to apply techniques from term-rewriting to improve (parts of) Petri nets in their non-functional properties, 
for example to add parallelism 
or checkpointing. 
\item They are reversible and enable backward computation: If a failure causes the loss of a partial 
result, it is possible to determine a minimal set of computations whose repetition will recover the 
lost partial result.
\end{compactitem}
\vskip0.1cm

The advantages of Petri nets as a mathematical modeling language have been summarized very nicely by van der Aalst \cite{aalst}:
They have precise execution semantics that assign specific meanings to the net, serve as the basis for
an agreement, are independent of the tools used, and enable process analysis and solutions.
Furthermore, because Petri nets are not based on events but rather on state transitions, it is possible to
differentiate between activation and execution of an elementary functional unit. In particular, interruption
and restart of the applications are easy. This is a fundamental condition for fault tolerance to hardware
failure. Lastly, van der Aalst notes the availability of mature analysis techniques that besides proving the
correctness, also allow performance predictions.

\subsubsection{Formal Definitions and Graphical Representation}
\label{subsubsect:def-PN}

Petri nets generalize finite automata by complementing them with 
distributed states and explicit synchronization.  

\begin{df}
A \define{Petri net} is a triple $\K{P, T, F}$, where  $P$ and $T$ are disjoint finite sets, 
the sets of \define{places} respectively \define{transitions}, and where $F$ is a subset  
$F\subset\K{P\times T}\cup\K{T\times P}$,  the \define{flow relation} of the net. 
\end{df}

This definition addresses the static parts of a Petri net. In addition, there are dynamic aspects
which describe the execution of the net.

\begin{df} A \define{marking} of a Petri net $\K{P, T, F}$ is a function $M:P\to \NN$. 
If $M(p)=k$, we say that \define{$p$ holds $k$ tokens under $M$}. 
\end{df}

To describe a marking, we also write $M = \{(p,M(p)) \mid p\in P, M(p) \neq 0\}$.

\begin{rem}
For our purposes here, given a Petri net $\K{P, T, F}$ together with a marking $M$, we think of the transitions
as algorithms, while the tokens held by the places  represent the data (see Section \ref{subsubsect:ext} below for more on this).
Accordingly, given a place $p$ and a transition $t$, we say that $p$ is an \define{input} (respectively 
\define{output}) place of $t$ if $(p,t)\in F$ (respectively $(t,p)\in F$).
\end{rem}

A marking $M$ defines the \define{state} of a Petri net. We say that $M$ \define{enables} a transition $t$
and write 
$M\overset{t}\longrightarrow$, 
if all input places of $t$ hold tokens, that is,  $\K{p,t}\in F$ implies $M(p)> 0$. 
A Petri net equipped with a marking $M$  is executed by \define{firing} a single transition 
$t$ enabled by $M$. This means to consume a token from each input place of $t$, and to add
a token to each output place of $t$. In other words, the firing of $t$ leads to a new marking $M'$, with 
$M'(p)=M(p)-\card{\set{\K{p,t}}\cap F}+\card{\set{\K{t,p}}\cap F}$  for all $p\in P$.
Accordingly, we write 
$M\overset{t}\longrightarrow M'$, and say that $M'$ is \define{directly reachable} from $M$ (by firing $t$). 
Direct reachability defines the (weighted) \define{firing relation
  $R\subseteq \mathcal{M}\times T\times \mathcal{M}$}
over all markings $\mathcal{M}$ by $\K{M,t,M'}\in R\iff M\overset{t}\longrightarrow
M'$.
More generally, we say that a marking $M'$ is \define{reachable by $\hat{t}$} from a marking $M$ if there is a
\define{firing sequence $\hat{t}=t_0\cdots{}t_{n-1}$}  such that $M=M_0\overset{t_0}\longrightarrow M_1\overset{t_1}\longrightarrow\dots\overset{t_{n-1}}\longrightarrow M_{n-1}=M'$.
The corresponding graph is called the \define{state graph}.
Fundamental problems concerning state graphs
such as \define{reachability} or \define{coverability} have been subject to many studies, and effective methods have been
developed to deal with these problems \cite{brauer,mayr_reach,lambert_reach,priese_wimmel,karp_miller,kosaraju}.

The static parts of a Petri net are graphically represented by a bipartite directed graph as indicated in the two examples 
below. 
In such a graph, a marking is visualized
by showing its tokens as dots in the circles representing the places. See Section \ref{subsubsect:ext} 
for examples.

\begin{Example}[Data Parallelism in a Petri Net]
\label{ex:1}
The Petri net $\Phi=\K{P,T,F}$ with $P=\set{i,o}$, $T=\set{t}$ and $F=\set{\K{i,t},\K{t,o}}$ is depicted by the graph
\begin{petrinet}
  \node[place]      (0)              {$i$};
  \node[transition] (1) [right of=0] {$t$};
  \node[place]      (2) [right of=1] {$o$};
  \path[->]
        (0) edge (1)
        (1) edge (2) 
  ;
\end{petrinet}
Suppose we are given the marking $M=M_0=\set{\K{i,n}}$ for some $n>0$. Then $t$ is enabled by $M_0$, and firing $t$ 
means to move one token from $i$ to $o$. This leads to the new marking $M_1=\set{\K{i,n-1},\K{o,1}}$. 
Now, if $n>1$, the marking $M_1$ enables $t$ again, and $\Phi$ can fire until the marking 
$M'=M_n=\set{\K{o,n}}$ is reached. We refer to this by writing 
$M\overset{t^n}\longrightarrow M'$. 
Note that with this generalized firing relation, the $n$ incarnations of $t$ have no relation to each other -- conceptually, they fire 
all at the same time, that is, in parallel. This is exploited in GPI-Space, and makes much sense if we take into consideration that in 
the real world, the transition $t$ would need some time to finish, rather than fire immediately (see Section \ref{subsubsect:ext}
below for how to model time in Petri nets). Data parallelism is nothing else than splitting data into parts and applying the same given function 
to each part. This is exactly what happens here: Just imagine that each token in place $i$ represents some part of the data. 
\end{Example}
\begin{Example}[Task Parallelism in a Petri Net]
\label{ex:2}
Let $\Psi$ be the Petri net depicted by the graph
\begin{petrinet}
  \node[place]      (0)              {$i$};
  \node[transition] (1) [right of=0] {$s$};
  \node[invisible]  (i) [right of=1] {};
  \node[place]      (2) [above of=i,node distance=0.5\nd] {};
  \node[place]      (3) [below of=i,node distance=0.5\nd] {};
  \node[transition] (f) [right of=2] {$f$};
  \node[transition] (g) [right of=3] {$g$};
  \node[place]      (4) [right of=f] {$l$};
  \node[place]      (5) [right of=g] {$r$};
  \node[transition] (j) [right of=i,node distance=3\nd] {$j$};
  \node[place]      (6) [right of=j] {};
  \path[->]
        (0) edge (1)
        (1) edge (2)
        (1) edge (3)
        (2) edge (f)
        (f) edge (4)
        (3) edge (g)
        (g) edge (5)
        (4) edge (j)
        (5) edge (j)
        (j) edge (6)  
  ;
\end{petrinet} 
and consider the marking $M=\set{\K{i,1}}$. Then $\Psi$ can fire $s$ and thereby enable 
$f$ \emph{and} $g$. So this corresponds to the situation where different independent algorithms ($f$ and $g$) are 
applied to parts (or incarnations) of data. Note that $f$ and $g$ can run in parallel. Just like for the net $\Phi$
from Example \ref{ex:1},  multiple tokens in place $i$ allow for parallelism of $s$ and thereby of $f$, $g$, and $j$ 
as well. With enough such tokens, we can easily find ourselves in a situation where $s$, $f$, $g$, and $j$ 
are all enabled at the same time (see again Section \ref{subsubsect:ext}
below for the concept of time in Petri nets).
\end{Example}

To sum this up: Petri nets have the great feature to automatically know about \emph{all} activities that can be executed at any given time. 
Hence \emph{all} available parallelism can be exploited.

\subsubsection{Extensions of Petri Nets in GPI-Space}
\label{subsubsect:ext}

To model real world applications, the classical Petri net described above needs to be enhanced,
for example to allow for the modeling of time and data. This leads to extensions such as \define{timed} 
and \define{coloured Petri nets}. Describing these and their properties in detail goes beyond the scope of this article. 
We briefly indicate, however, what is realized in GPI-Space.

{\bf Time.} In the real world, transitions need time to fire (there is no concept of time in the classical Petri net).
In systems modeling, \define{timed Petri nets} are used to predict best or worst case running times. In  \cite{heiner_popova},
for example, the basic idea behind including time is to split the firing process into 3 phases:
\begin{enumerate}
\item The tokens are removed from the input places when a transition fires,
\item the transition holds the tokens while working, and
\item the tokens are put into the output places when a transition finishes working.
\end{enumerate}
This implies that a marking as above alone is not enough to describe
the full state of a timed Petri net. In addition, assuming that phases 1 and 3 
do not need any time, the description of such a state includes the knowledge of all active transitions 
in phase 2 and all tokens still in use. Passing from a standard to a timed Petri net, the behavior of the net is unaffected in the 
sense that any state reached by the timed net is also reachable with
the standard net (see again \cite{heiner_popova}).

{\bf Types and Type Safety.} 
As already pointed out, in practical applications, tokens are used to represent data.  In the classical Petri net, however, tokens 
carry no information, except that they are present or absent. It is therefore necessary to extend the classical concept
by allowing tokens with attached data values, called the \define{token colours} (see \cite{CPN}). Formally, in addition 
to the static parts of the classical Petri net, a \define{coloured Petri net} comes equipped
with a finite set $\Sigma$ of \define{colour sets}, also called \define{types},
together with a \define{colour function} $C: P\rightarrow \Sigma$ (``all tokens in a given place $p\in P$ represent
data of the same type'').
Now, a \define{marking} is not just a mapping  $P \rightarrow \NN$ (``the count of the tokens''), 
but a mapping  $\Delta \rightarrow\NN$, where $\Delta =\{(p,c) \mid p\in P, c\in C(p)\}$.
Imagine, for example, that the type $C(p)$ represents certain blocks of data. 
Then, in order to properly process the data stored in $c\in C(p)$, we typically need to know \emph{which block} 
out of \emph{how many blocks} $c$ is. That is, implementing the respective type means to equip each block
with two integer numbers. We will see below how to realize this in GPI-Space. 

Type safety is enforced in GPI-Space by rejecting Petri nets whose flow relation does not respect the imposed types.
More precisely, transitions are enriched by the concept of a \define{port}, which is a typed place holder for
incoming or outgoing connections.
Type safety is in general checked statically; for transitions relying on legacy code,
it is also checked dynamically during execution  (``GPI-Space does not trust legacy code'').

{\bf Expression Language.} 
GPI-Space includes an embedded programming language which serves a twofold purpose. On the one hand, it
allows for the introduction and handling of user-defined types. The type for blocks of data as discussed above,  for example,  may be described 
by the snippet
\begin{verbatim}
 <struct name="block">
   <field name="num" type="uint"/>
   <field name="max" type="uint"/>
 </struct>
\end{verbatim}
Types can  be defined recursively. Moreover, GPI-Space offers a special kind 
of transition which makes it possible to manipulate the colour of a token. In the above situation, for
instance, the ``next block`'' is specified by entering
\begin{verbatim}
 ${block.num} := ${block.num} + 1
\end{verbatim}
Again, all such expressions are type checked.

The second use made of the embedded language is the convenient handling of ``tiny
computations''. Such computations can be executed directly within the workflow engine rather than
handing them over to the runtime system for scheduling and execution, and 
returning the results to the workflow engine. 

{\bf Conditions.} 
In GPI-Space, the firing condition of a transition can be subject to a logical expression 
depending on properties of the input tokens of the transition. To illustrate this, consider again the 
net $\Psi$ from Example~\ref{ex:2}, and suppose that the input 
place $i$ contains tokens representing blocks of data as above. Moreover suppose
that the transition $s$ is just duplicating the blocks in order to apply $f$ and $g$ to each 
block. Now, the transition $j$ typically relies on joining the blocks of output data in
$l$ and $r$ with the same number. This is implemented by adding the condition
\begin{verbatim}
 ${l.num} :eq: ${r.num}
\end{verbatim}
to $j$. This modifies the behavior of the Petri net in a substantial way: The transition $j$ might stay 
disabled, even though there are enough tokens available on all input places. This change in behavior 
has quite some effects on the analysis of the net: For example, conflicts\footnote{A
\define{conflict} arises from a place $p$  holding at least one token if $p$ is an input place to more 
than one transition, but does not hold enough tokens to fire all these transitions.} might disappear,
while loop detection becomes harder. GPI-Space comes
with some analysis tools that take conditions into account. It is beyond the scope of this paper 
to go into detail on how to ensure correctness in the presence of conditions. Note, however, that 
the analysis is still possible in practically relevant situations, that is, in situations where a number 
of transitions formulates a complete and non-overlapping set of conditions (hence, there are no
conflicts or deadlocks).

\subsubsection{Example: Reduction and Parallel Reduction}\label{ex parallel red}

Parallelism can often be increased by splitting problems into smaller independent problems. This
requires that we combine (computer scientists say: reduce) the respective partial results into 
the final result.  Suppose, for example, that the partial results are obtained by executing a 
Petri net, say, $\Pi$, and that these results are attached as colours to tokens which are all added to 
the same place $p$ of $\Pi$. Further suppose that reducing the partial results means to apply an 
addition operator $+$. Then the reduction problem can be modeled by the Petri net

\vskip0.2cm
\noindent
\begin{petrinet}
 \node[place]      (p)              {$p$};
 \node[transition] (f) [right of=p] {$+$};
 \node[place]      (s) [right of=f] {$s$};
 \path[->]
       (p) edge (f)
       (f) edge [bend left] (s)
       (s) edge [bend left] (f)
 ;
\end{petrinet}

\vskip0.1cm
\noindent
which fits  into $\Pi$ locally as a subnet. The place $s$ holds the sum which is updated as long as
partial results are computed and assigned to the place $p$. The update operation executes $s_{i+1}=s_i+p_i$, where $s_i$ 
is the current value of the sum on $s$ and $p_i$ is one partial result on $p$. Note that this only makes 
sense if $+$ is  commutative and associative since the Petri net does not guarantee any order of execution. Then in the 
end, the value of the sum on $s$ is, say,  $s_0+p_0+\ldots+p_{n-1}$, where $s_0$ is the initial value of the state.
Note that $s_0$ needs to be set up by some mechanism not shown here. 

Often this is not what is wanted, for example because it may be hard to set up an initial state. The modified subnet 

\vskip0.2cm
\noindent
\begin{petrinet}
 \node[place]      (p)              {$p$};
 \node[transition] (f) [right of=p] {$+$};
 \node[place]      (s) [right of=f] {$s$};
 \node[transition] (z) [above of=f] {$\downarrow$};
 \node[place]      (a) [right of=z] {$\bullet$};
 \path[->]
       (p) edge (f)
       (a) edge (z)
       (p) edge [bend left] (z)
       (z) edge [bend left] (s)
       (f) edge [bend left] (s)
       (s) edge [bend left] (f)
 ;
\end{petrinet}

\vskip0.1cm
\noindent
computes $p_0+\ldots+p_{n-1}$ on $s$, and does not require any initial state. The first execution 
of this net fires the transition $\downarrow$ which just moves the single available token from $p$ to $s$, 
disabling itself. The transition $+$ is not enabled as long as $\downarrow$ has not yet fired, so there is no 
conflict between $\downarrow$ and $+$.

It is nice to see that Petri nets allow for local rewrites, local in the sense that no knowledge about the surrounding 
net is required in order to prove the correctness of the rewrite operation. Note, however, that both Petri nets above expose no 
parallelism: Whenever $+$ fires, the sum on $s$ is used, and no two incarnations of $+$ can run at the same time. 
The modified subnet 

\vskip0.2cm
\noindent
\begin{petrinet}
 \node[invisible]  (0)             {};
 \node[transition] (l) [above of=0] {$\to$};
 \node[transition] (r) [below of=0] {$\to$};
 \node[place]      (d) [left of=0,node distance=0.33\nd] {};
 \node[place]      (u) [right of=0,node distance=0.33\nd] {$\bullet$};
 \node[place]      (p) [left of=d] {$p$};
 \node[place]      (x) [right of=l] {$s$};
 \node[place]      (y) [right of=r] {$r$};
 \node[transition] (f) [right of=u] {$+$};
 \path[->]
       (d) edge [bend right] (r)
       (r) edge [bend right] (u)
       (u) edge [bend right] (l)
       (l) edge [bend right] (d)
       (p) edge [bend left] (l)
       (p) edge [bend right] (r)
       (l) edge (x)
       (r) edge (y)
       (x) edge [bend left] (f)
       (y) edge [bend right] (f)
 ;
 \draw[->]
       (f) to [out=-45,in=10] ($(r)+(0.66\nd,-0.66\nd)$) to [out=190,in=-90] (p)
 ;
\end{petrinet}

\vskip0.1cm
\noindent
shows a different behavior. Now, the tokens from $p$ are distributed on the two places
$s$ and $r$. As soon as both $s$ and $r$ hold a token, one incarnation
of $+$ can fire. At the same time, the transitions $\to$ can continue
to move tokens to $s$ and $r$, enabling $+$, and finally leading to
multiple incarnations of $+$ running at the same time. Note that the 
output of $+$ is fed back  to $p$, which makes much sense as it is just another partial result.

Altogether, this example shows how Petri nets can be used for a compact and executable specification 
of expected behavior, and then be changed gradually to get different non-functional properties.

\section{Modelling the Smoothness Test as a Petri Net}\label{sec Petri smooth}
Using the inherently parallel structure of the hybrid smoothness test within GPI-Space requires
a reformulation of our algorithms in the language of Petri nets. This will
also emphasize the possible concurrencies, which will automatically be exploited by GPI-Space.

The Petri net $\Gamma$ below

\vskip0.2cm
\noindent
\begin{petrinet}
  \node[place]      (i)              {$i$};
  \node[transition] (t) [right of=i] {$t$};
  \node[place]      (1) [right of=t] {};
  \node[transition] (r) [below of=1] {$r_t$};
  \node[invisible]  (0) [right of=1] {};
  \node[transition] (d) [above of=0] {$d$};
  \node[transition] (j) [below of=0] {$j$};
  \node[place]      (2) [right of=d] {};
  \node[place]      (3) [right of=j] {};
  \node[invisible]  (x) [right of=2] {};
  \node[transition] (s) [above of=x,node distance=0.5\nd] {$s$};
  \node[transition] (h) [below of=s] {$h_d$};
  \node[transition] (g) [below of=h] {$h_j$};
  \node[transition] (q) [below of=g] {$r_j$};
  \node[invisible]  (y) [below of=h,node distance=0.5\nd] {};
  \node[place]      (o) [right of=y] {$o$};
  \path[->]
        (i) edge (t)
        (t) edge (1)
        (1) edge (r)
        (1) edge (d)
        (1) edge (j)
        (d) edge (2)
        (j) edge (3)
        (2) edge (s)
        (2) edge (h)
        (3) edge (g)
        (3) edge (q)
        (h) edge (o)
        (g) edge (o)
        (s) edge [bend right=25] node [above] {} (i)
  ;
\end{petrinet}

\vskip0.1cm
\noindent
is a representation of the hybrid smoothness test as summarized in Algorithm~\ref{alg smoothnesstest}.
A computation starts with one token on the input place $i$, representing a triple $(I_W, I_X, q)$. At the top level, 
we will typically start with $I_W = \langle 0 \rangle$ and $q = 1$, that is, with $W=\mathbb{A}^n_{\bK}$.
Transition $t$ performs the check for (local) equality as in step 1 of Algorithm \ref{alg smoothnesstest}. Its output token represents, in addition to a copy of the input triple, a flag indicating the result of the check. By the use of conditions, it is ensured that the token will enable exactly one of the subsequent transitions. If the result of the check is \texttt{true}, which can only happen for tokens produced at the deepest level of recursion, 
then the variety is smooth in the current chart, and no further computation is required in this chart. In this case, the token will be removed by transition $r_t$. If the result is \texttt{false}, then the next action depends on whether the prescribed codimension limit $c$ in step 3 of Algorithm \ref{alg smoothnesstest} has been reached. 

If the codimension of $X$ in $W$ is $\leq c$, then transition $j$ will fire, which corresponds to executing Algorithm \ref{alg embJac} \texttt{EmbeddedJacobian}. 
If the Jacobian check gives \texttt{true}, the variety is smooth in the current chart, and the token will be removed by transition $r_j$.
 If the Jacobian check gives \texttt{false}, then the variety is not smooth. The transition $h_j$ will then add a token with the flag \texttt{false} 
to the output place $o$. Here, the letter $h$ stands for 'Heureka', the greek term for 'I have found'. If a Heureka occurs, all
remaining tokens except that one on $o$ are removed by clean-up transitions not shown in $\Gamma$, no new tasks are started, and 
all running work processes are terminated.\footnote{At current stage, the concept of a Heureka is not yet fully supported by both 
GPI-Space and {\sc{Singular}}, and is replaced in our implementation by a work-around.}

If the codimension of $X$ in $W$ is larger than $c$, then transition $d$ will fire, which corresponds to executing Algorithm \ref{alg deltacheck} 
\texttt{DeltaCheck} (considered as a black box at this point). The ensuing output token represents, in addition to a copy of $(I_W, I_X, q)$, a flag 
indicating the result of  \texttt{DeltaCheck}. If this result is \texttt{true}, then a descent to an ambient space of dimension one less is necessary. 
In this case, transition~$s$ fires, performing Algorithm \ref{alg descendembsmooth} \texttt{DescentEmbeddingSmooth}. This algorithm outputs 
a list of triples $(I_{W'},I_{X},q')$, each of which needs to be fed back to place $i$ for further processing. Note however, that in the formal 
description of Petri nets in Section \ref{sec petrinet}, we do not allow that a single firing of a transition adds more than one token to a 
single place. To model the situation described above in terms of a Petri net, we therefore introduce the subnet

\vskip0.2cm
\noindent
\begin{petrinet}
  \node[transition] (s)              {$s$};
  \node[place]      (1) [right of=s] {};
  \node[transition] (e) [right of=1] {$e$};
  \node[place]      (i) [right of=e] {$i$};
  \node[transition] (x) [below of=1] {$x$};
  \path[->]
        (s) edge (1)
        (1) edge [bend right] (e)
        (e) edge [bend right] (1)
        (e) edge (i)
        (1) edge (x)
  ;
\end{petrinet}

\vskip0.1cm
\noindent
between $s$ and $i$. Now, when firing, the transition $s$ produces a single output token, which represents a list $L$ of triples as above. 
As long as $L$ is non-empty, transition $e$ iteratively removes a single element from $L$ and assigns it to a token which is added to place 
$i$. Finally, transition $x$ deletes the empty list. 
These operations are formulated with expressions and conditions (see Subsection \ref{subsubsect:ext}), and can be parallelized as in Example \ref{ex parallel red}.
If, on the other hand,  \texttt{DeltaCheck} returns \texttt{false}, then  the variety is not smooth. Correspondingly, the transition $h_d$ fires, adding a token with 
the flag \texttt{false} to the output place $o$ and triggering a Heureka.

If all tokens within $\Gamma$ have been removed, all charts have been processed without detecting a singularity, and $X$ is smooth. In this case, a token 
with flag \texttt{true} will be added\footnote{This is done using some additional places and transitions not shown in $\Gamma$.} to the output place $o$.
Together with the fact that the recursion depth of Algorithm \texttt{DescentEmbeddingSmooth} is limited by the codimension of $X$ in $W$ and that each 
instance of it only produces finitely many new tokens, it is ensured that the execution of the Petri net terminates after a finite number of firings with exactly 
one token at $o$. GPI-Space automatically terminates if there are no more enabled transitions.

Note that, in addition to the task parallelism visible in $\Gamma$ (see also Example~\ref{ex:2}), all transitions in $\Gamma$ allow for multiple parallel 
instances, realizing data parallelism in the sense of Example~\ref{ex:1}.

So far, we have not yet explained how to model Algorithms \ref{alg descendembsmooth} to \ref{alg embJac}, 
on which Algorithm~\ref{alg smoothnesstest} is based. Algorithm \ref{alg embJac}\ \texttt{EmbeddedJacobian}, for instance, 
has been considered as a black box represented by transition $j$.
Note, however, that this algorithm exhibits a parallel structure of its own:
Apart from updating the ideal $Q$ in step 9 in order to use the condition $q \not\in Q$ as a termination criterion 
for the \textbf{while} loop in step 5,  the computations within the loop are independent from each other. 
Hence, waving step 9 and the check $q \not\in Q$ is a trivial way of
introducing data parallelism: Replace the subnet

\vskip0.2cm
\noindent
\begin{petrinet}
\node[place]      (i)              {};
  \node[transition] (j) [right of=i] {$j$};
  \node[place]      (1) [right of=j] {};
   \path[->]
    (i) edge (j)
    (j) edge node {} (1)
  ;
\end{petrinet}

\vskip0.1cm
\noindent
of $\Gamma$ by the Petri net

\vskip0.2cm
\noindent
\begin{petrinet}
  \node[place]      (i)              {};
  \node[transition] (p) [right of=i] {$p$};
  \node[place]      (1) [right of=p] {};
  \node[transition] (j) [right of=1] {$j'$};
  \node[place]      (2) [right of=j] {};
  \path[->]
    (i) edge (p)
    (p) edge node {} (1)
    (1) edge (j)
    (j) edge (2)
  ;
\end{petrinet}

\vskip0.1cm
\noindent
Here, the transition $p$ generates tokens corresponding to the submatrices $M$ of $\operatorname{Jac}(I_W)$ as described in step 
\ref{line Jac M} of the algorithm. 
Transition $j'$ performs the embedded Jacobian criterion computations in  steps \ref{line det jac} 
and \ref{line adj jac} to~\ref{line false jac}. 

Of course, in this version, the algorithm may waste valuable resources: 
There is a potentially large number of tokens generated by $p$ which lead to superfluous calculations further on.
This suggests to exploit the condition $q \not\in Q$ also in the parallel approach. That is, 
transition $j'$  should fire only until a covering of $X\cap D(q)$ has been obtained, and then trigger a Heureka 
for the \texttt{EmbeddedJacobian} subnet. However, at this writing, creating the infrastructure for a local Heureka 
is still subject to ongoing development. 
To remedy this situation at least partially, our current approach is to first compute all minors and collect them in $Q$,
and then to use a heuristic way\footnote{Radical membership seems to offer a more conceptual way: 
With notation as in Algorithm~\ref{alg embJac}, 
we have $g^m\in Q + I_W$ for some $m$. 
Given a representation 
$g^m = \sum_i a_i q_i + \sum_j b_j g_j$ with minors $q_i = \operatorname{det}(M_i)\in Q$, the $D(q_i)$  with $a_i\neq 0$ 
cover $X\cap D(g)$. However, finding such representations relies on
Gr\"obner bases computations and is, hence, not effective.} of iteratively dropping minors as long as $q \in Q$.
\begin{rem}
Both Algorithm \ref{alg deltacheck} \texttt{DeltaCheck} (see step \ref{line while delta}) and Algorithm~\ref{alg descendembsmooth} \texttt{DescentEmbeddingSmooth}  (see step \ref{line cover dec}) can be parallelized in a similar fashion. 
\end{rem}

The logic in all transitions is implemented in C++, using {\tt libSingular}, the C++-library version of {\sc Singular}, as the computational back-end. Some parts are 
written in the {\sc Singular} programming language, in particular those relying on functionality implemented in the {\sc Singular} libraries. In order to transfer the 
mathematical data from one work process to another one (possibly running on a different machine), the complex internal data structures need to be serialized. For 
this purpose, we use already existing functionality of {\sc Singular}, which relies on the so-called ssi-format. This serialization format has been created to efficiently 
represent {\sc Singular} data structures, in particular trees of pointers. The mathematical data objects communicated within the Petri net are stored in files located on 
a parallel file-system BeeGFS\footnote{See \url{https://de.wikipedia.org/wiki/BeeGFS}}, which is accessible from all nodes of the cluster. Alternatively, we could also use the virtual memory layer provided by 
GPI-Space. However, on the cluster used for our timings, the speed of the (de)serialization is limited by the CPU and not the underlying storage medium. 

The implementation of the hybrid smoothness test can be used through a startup binary, which is suitable for queuing systems  commonly used in computer clusters. 
Moreover, there is also an implementation of a dynamical module for {\sc Singular}, which allows the user to directly run the implementation from within the {\sc Singular} 
user interface. It should be noted that neither the instance of {\sc Singular} providing the user interface nor {\tt libSingular} had to be modified in order to cooperate with GPI-Space.

\section{Applications in Algebraic Geometry and Behaviour of the Smoothness Test}\label{sec timings}

To demonstrate the potential of the hybrid smoothness test and its implementation as described 
in Section \ref{sec Petri smooth}, we apply it  to problems originating from current research in 
algebraic geometry. We focus on  two classes of surfaces of general type, which provide good test examples 
since their defining ideals are quite typical for those arising in advanced constructions in algebraic geometry: 
They have large codimension, and  their rings of polynomial functions are Cohen-Macaulay and even Gorenstein. 
Due to their structural properties,  rings of these types are of fundamental importance in algebraic geometry. 

We begin by giving some background on our test examples, and then provide timings and investigate how the 
implementation scales with the number of cores.

\subsection{Applications in Algebraic Geometry}\label{sec applications}

The concept  of moduli spaces provides geometric solutions to classification problems
and is ubiquitous in algebraic geometry where we wish to classify algebraic varieties
with prescribed invariants. There is a multitude of abstract techniques for
the qualitative and quantitative study of these spaces, without, in the first instance,
taking explicit equations of the varieties under consideration into account. Relying on 
equations and their syzygies (the relations between the equations), on the other hand, we may manipulate geometric objects 
using a computer. In particular, if an explicit way of constructing a general element of a 
moduli space $M$ is known to us, we may  detect geometric properties of $M$ by studying 
such an element computationally. Deriving a construction is the innovative and often 
theoretically involved part of this approach, while the technically difficult part, the 
verification of the properties of the constructed objects, is left to the machine. 

Arguably, the most important property to be tested here is smoothness. To provide a basic example 
of how smoothness affects the properties of the constructed variety, note that a smooth 
plane cubic is an elliptic curve (that is, it has geometric genus one), whereas 
a singular plane cubic is a rational curve (which has geometric genus zero).

The study of (irreducible smooth projective complex)
surfaces with geome\-tric genus and irregularity $p_g=q=0$ has a rich history,
and is of importance for several reasons, with surfaces of general type providing
particular challenges (see \cite {BHPV}, \cite{zbMATH05974000}). The self-intersection of a
canonical divisor $K$ on a minimal surface of general type with
$p_g=q=0$ satisfies $1\leq K^2\leq 9$, where the upper bound is given by the
Bogomolov-Miyaoka-Yau inequality  (see \cite[VII,
4]{BHPV}).  Hence, these surfaces belong to only finitely many components 
of the Gieseker moduli space for surfaces of general type \cite{MR0498596}.
Interestingly enough, Mumford asked whether their classification can be done by a computer.

Of particular interest among these surfaces are the \emph{numerical Godeaux} and \emph{numerical Campedelli
surfaces}, which satisfy $K^2=1$ and $K^2=2$, respectively. As Miles Reid  puts it \cite{ReidOV}, these ``are in some sense the first
cases of the geography of surfaces of general type, and it is somewhat embarrassing that
we are still quite far from having a complete treatment of them''. Their study is
``a test case for the study of all surfaces of general type''.

For our timings, we focus on two specific examples, each defined over a finite prime field $\bk$.
Though, mathematically, we are interested in the geometry of the surfaces in characteristic zero,
computations in characteristic $p$ (which are less expensive) are enough to demonstrate 
the behavior of the smoothness test.

The first example is a numerical Campedelli surface $X$ with torsion group  $\mathbb{Z}/6 \mathbb{Z}$, which has been constructed 
in \cite{NP1} (we work over the finite field $\bk=\mathbb{Z}/103\mathbb{Z}$ which contains, as required by the construction, a primitive 3rd root of 
unity). The construction yields $X$ as a $\mathbb{Z}/6 \mathbb{Z}$-quotient of a covering surface $\widetilde{X}$ which, in turn, is realized as a 
subvariety of the weighted projective space $\mathbb{P}_\bK(1,1,1,1,1,2,2,2)$, where $\bK=\overline{\bk}$. That is, the homogeneous coordinate ring of the ambient space
is a polynomial ring with $5$ variables of degree 1 and $3$ variables of degree 2, and the codimension of $X$ in that space is 5.
In fact, $\widetilde{X}$ is constructed from a hypersurface in projective 3-space $\mathbb{P}_\bK^{3}$ using Kustin-Miller unprojection \cite{KM}.
This iterative process  increases in every iteration step the codimension of a given Gorenstein ring  
by one, while retaining the Gorenstein property. See \cite{BP3,BPlib} for an outline and implementation.
We use the hybrid smoothness test to verify the quasi-smoothness of $\widetilde{X}$, that is, the smoothness of 
the affine cone over $\widetilde{X}$ outside the origin. This amounts to apply the test
in each of the 8 (affine) coordinate charts of $\mathbb A^8_\bK\setminus \{0\}$. Note that in general,
quasi-smoothness does not automatically guarantee smoothness due to the singularities of
the weighted projective space. In our case, however, the smoothness of both surfaces 
$\widetilde{X}$ and $X$ follows from the quasi-smoothness of $\widetilde{X}$  by a straightforward 
theoretical argument.

The second example is a  numerical Godeaux surface with trivial torsion group. It is taken from
ongoing research work by Isabel Stenger, who uses a construction method suggested by
Frank-Olaf Schreyer in \cite{FOS-OW}. The resulting surface is a subvariety of $\mathbb{P}_\bK^{13}$ (of 
codimension $11$) which is cut out  by $38$ quadrics (and is again realized over the finite field $\mathbb{Z}/103 \mathbb{Z}$). 
Using our implementation, we verify the smoothness of the surface by verifying smoothness in each of the 14 
coordinate charts of $\mathbb{P}_\bK^{13}$. Note that to the best of our knowledge,
this cannot be done by other means.

\subsection{Behavior of the Smoothness Test}
The timings in this subsection are taken on a cluster provided by Fraunhofer ITWM Kaiserslautern.  This cluster consists of $192$ nodes, each of which
has $16$ Intel Xeon E5-2670 cores running at $2.6$ GHz with $64$ GB of RAM (so the cluster has a total of $3072$ cores and $12$ 
TB of RAM). The nodes are connected via FDR Infiniband. Note that the cores are utilized by GPI-Space in a non-hyperthreading way, that is, with a maximum of $16$ jobs per node.

In the case of the the numerical Campe\-del\-li surface, we apply the hybrid smoothness test  
with a descent in codimension to minors of size $2\times 2$. Timings are given in Table \ref{tab timings}
\begin{table}[h]
\caption{Run-times of the hybrid smoothness test when applied to the numerical Campedelli surface\medskip}%
\label{tab timings}%
\begin{tabular}
[c]{c|ccc|c}%
number of cores & time / sec &  & number of cores & time / sec\\\cline{1-2}%
\cline{4-5}%
\textbf{1} & \textbf{2\,686.98} &  & 48 & 68.64\\
\textbf{2} & \textbf{1350.67} &  & \textbf{64} & \textbf{51.98}\\
\textbf{4} & \textbf{684.77} &  & 80 & 39.64\\
6 & 466.96 &  & 96 & 32.30\\
\textbf{8} & \textbf{356.18} &  & 112 & 27.56\\
10 & 290.75 &  & \textbf{128} & \textbf{26.15}\\
12 & 245.19 &  & 160 & 21.36\\
14 & 215.46 &  & 192 & 19.10\\
\textbf{16} & \textbf{191.65} &  & 224 & 18.52\\
\textbf{32} & \textbf{99.06} &  & \textbf{256} & \textbf{18.41}%
\end{tabular}
\end{table}
for $1$ up to $256$ cores (the powers of two are shown in bold), where we always take the average over $100$ runs. See also Figure \ref{fig S5 timings}  for 
a visualization, where the data points correspond to the entries of Table \ref{tab timings}, and the plotted curve is a 
least-square fit of the run-times 
using a hyperbola.
\begin{figure}
\caption{Display of  the run-times from Table \ref{tab timings}  for the numerical Campedelli surface (in seconds)\medskip}%
\label{fig S5 timings}
\begin{center}
\includegraphics[
height=12cm,
]%
{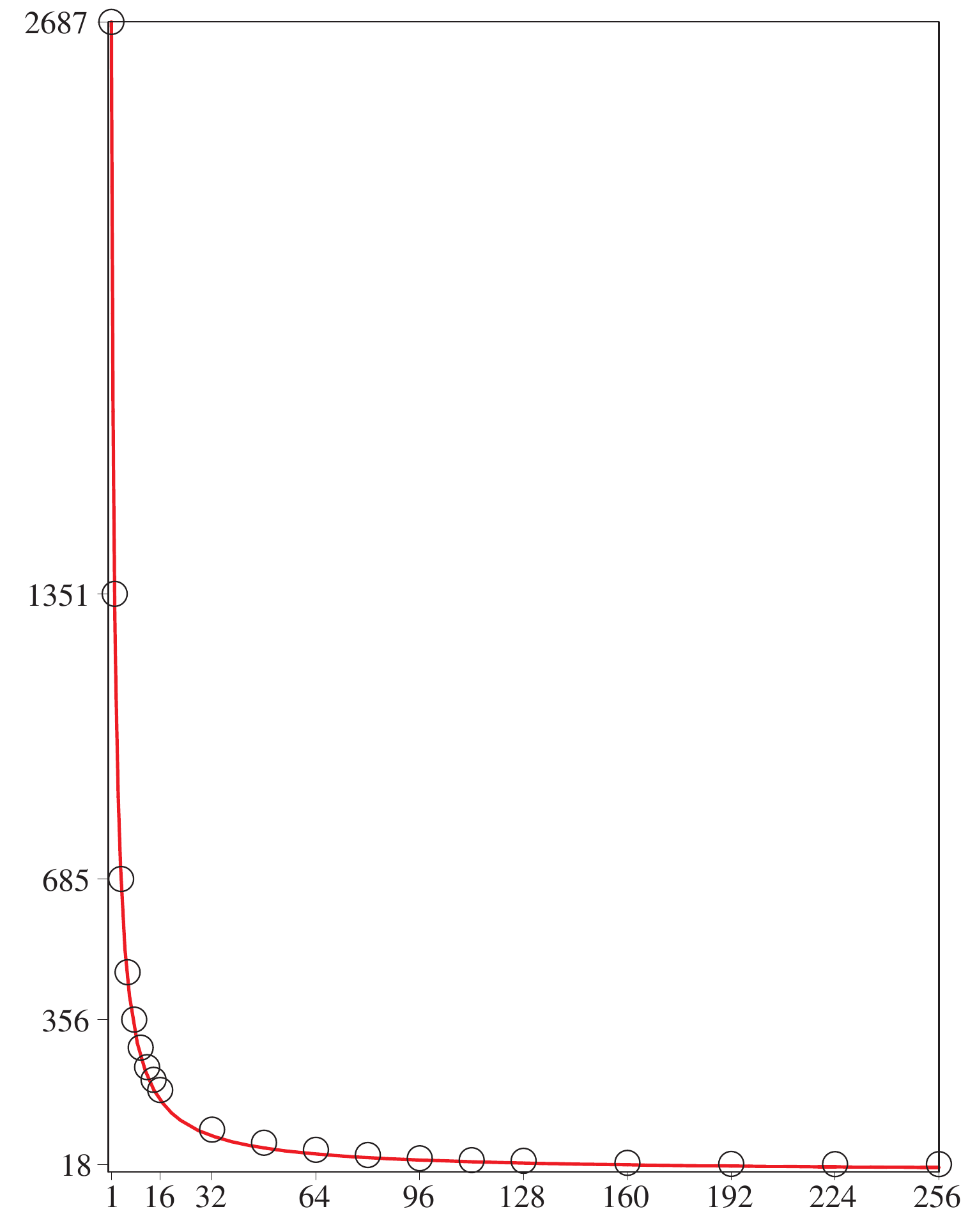}%
\end{center}
\end{figure}
In Figure \ref{fig scaling}, 
\begin{figure}
\caption{Scaling with the number of cores of the run-times from Table \ref{tab timings} for the numerical Campedelli surface  \medskip}%
\label{fig scaling}
\smallskip
\begin{center}
\includegraphics[
height=5.05cm,
]%
{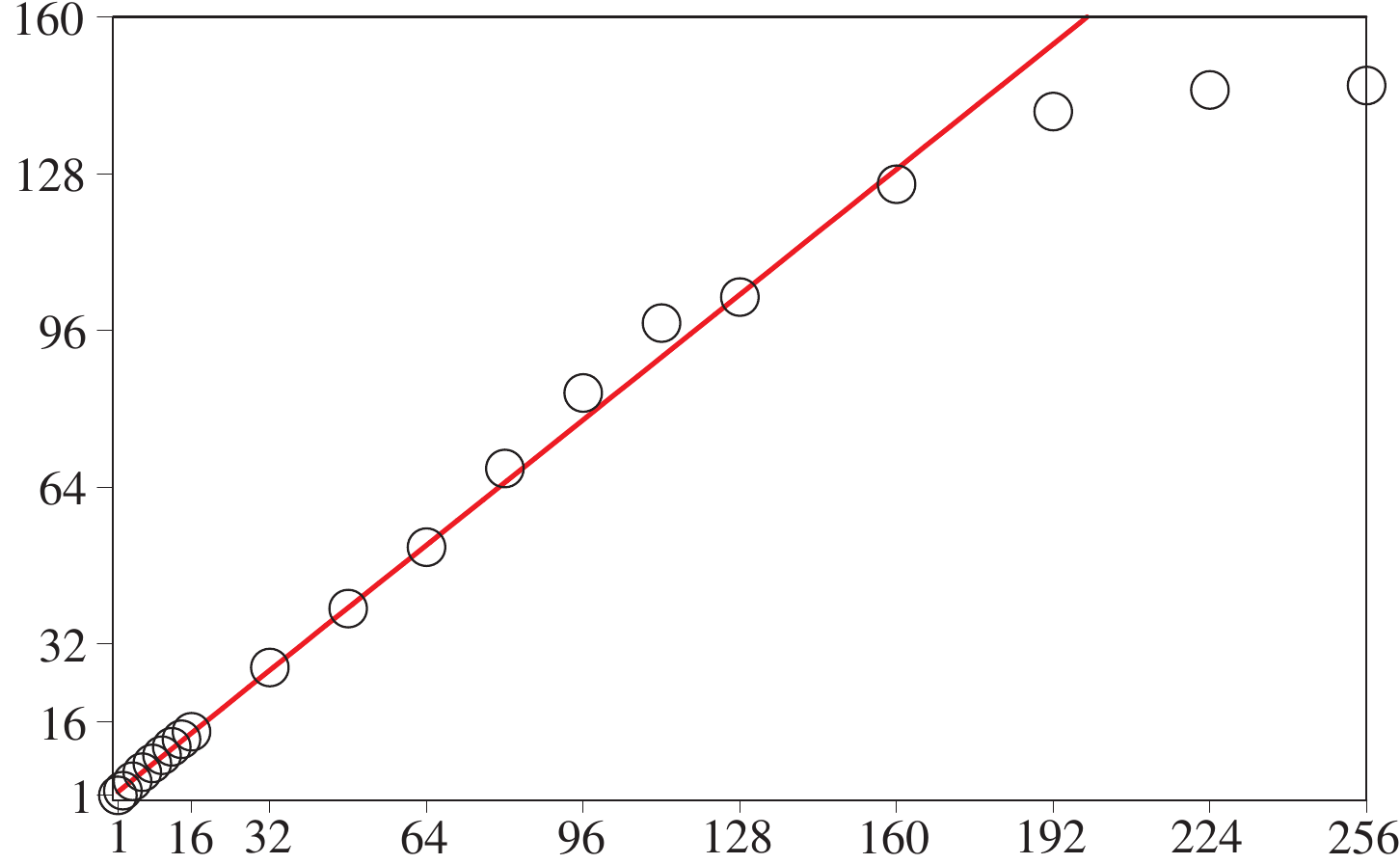}%
\end{center}
\end{figure}
we show how the implementation scales with the number of cores by plotting the speedup-factor (relative to the single core run-time) versus the number of cores. We
observe a linear speedup up to $160$ cores. Figure \ref{fig efficiency} visualizes the parallel efficiency (speedup divided by number of cores) of the computation.
\begin{figure}
\caption{Parallel efficiency determined from run-times in Table \ref{tab timings} for the numerical Campedelli surface  \medskip}%
\label{fig efficiency}
\smallskip
\begin{center}
\includegraphics[ height=5.32cm
]%
{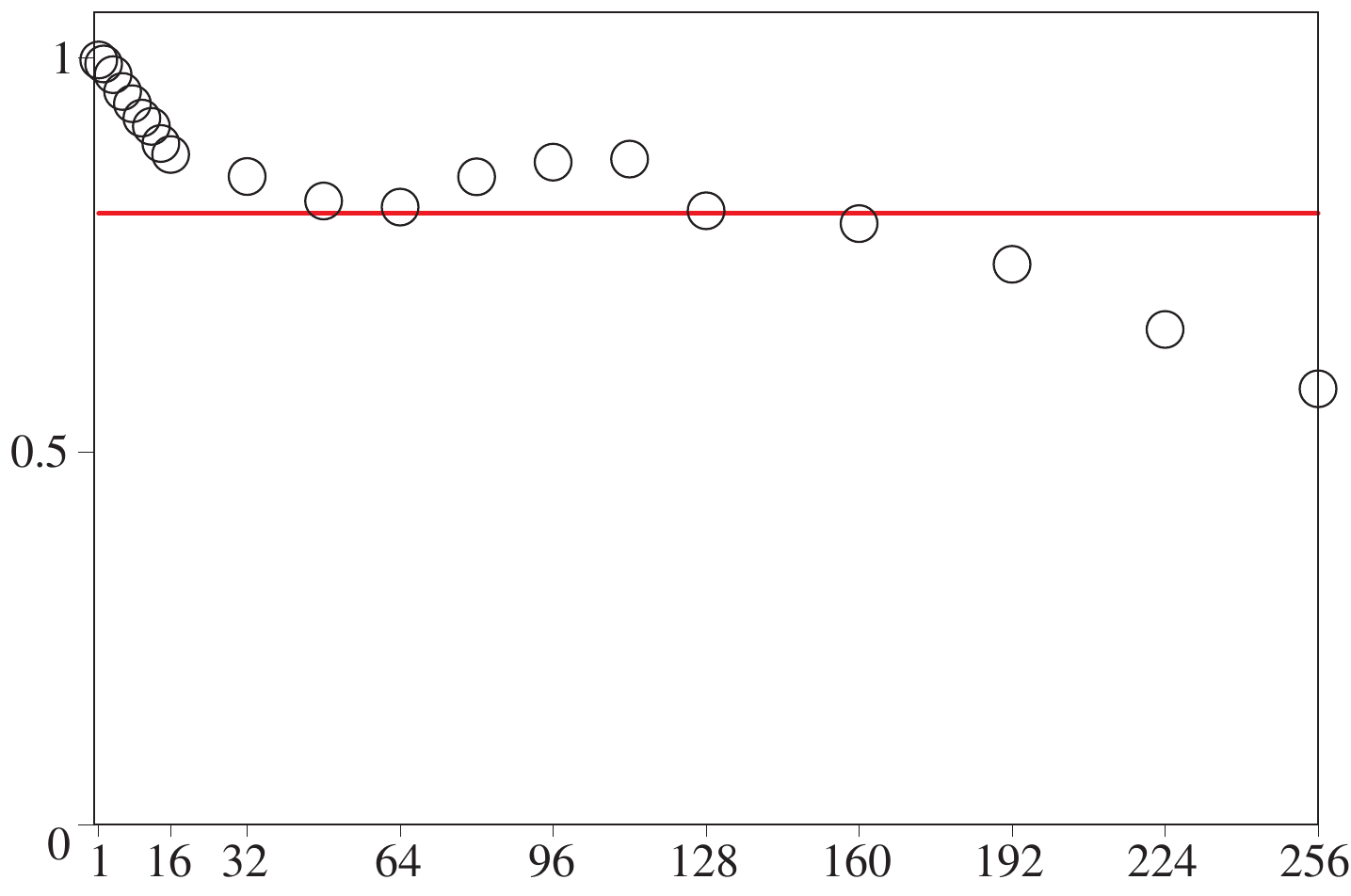}%
\end{center}
\end{figure}

To give some explanation for this observation, we note that starting from the 8 affine coordinate 
charts of $\mathbb A^8_\bK\setminus \{0\}$, the hybrid smoothness test in its current implementation 
may branch into up to 323 charts at the leaves of the resulting tree of charts. 
As it turns out, however, already a proper subset of the  coordinate charts is enough to
cover the affine cone over $\widetilde{X}$ outside the origin, and the algorithm will terminate 
once this situation has been achieved. Typically, the algorithm finishes with a total of about  $240$ charts. 
Hence, we cannot expect any scaling beyond this number of cores. Note that the descent in codimension involves a smaller number of charts, which also limits the scaling. Applying the projective Jacobian criterion, that is, 
computing the ideal $J$ generated by the codimension-sized minors of the Jacobian matrix and saturating the ideal $I_X+J$
with respect to the irrelevant maximal ideal (which is generated by all variables), takes about $580$ 
seconds on one core and uses about $15$ GB of memory. We observe that, while single runs of the 
massively parallel implementation take more than these $580$ seconds, by passing 
to a larger number of cores, we can achieve a speedup of at least factor $30$ compared to the projective
Jacobian criterion. We also remark that while the computation of the minors in the Jacobian criterion 
can be done in parallel, the subsequent saturation (which takes most of the total computation time) is 
an inherently sequential process. With regard to memory usage, each of the individual Jacobian criterion 
computations in Algorithm \ref{alg embJac} \texttt{EmbeddedJacobian}  does not exceed $450$ MB of RAM  
(due to the small size of minors after the descent).

In case of the numerical Godeaux surface, we apply the hybrid smoothness test  with a descent in codimension down
to minors of size $3\times 3$. So far, smoothness of this surface could not be verified by the projective Jacobian criterion, 
which runs out of memory exceeding the available $384$ GB of RAM of the machine we used. The hybrid 
smoothness test easily handles this example, using a maximum of $3.1$ GB of RAM for one of the individual Jacobian 
criterion computations after the descent. Timings are given in Table \ref{tab timings2} 
\begin{table}[h]
\caption{Run-times of the hybrid smoothness test when applied to the numerical Godeaux surface\medskip}%
\label{tab timings2}%
\begin{tabular}
[c]{c|c|c}%
number of nodes & number of cores & time / sec\\\hline\cline{1-2}%
\textbf{1} &\textbf{16} & \textbf{53\thinspace000}\\
\textbf{2} &\textbf{32} & \textbf{33\thinspace000}\\
\textbf{4} &\textbf{64} & \textbf{12\thinspace200}\\
\textbf{8} &\textbf{128} & \textbf{3\thinspace100}\\
\textbf{16} &\textbf{256} & \textbf{2\thinspace460}%
\end{tabular}
\end{table}
for $16$ up to $256$ cores, where we always take the average over $10$ runs.
See also Figure \ref{fig godeaux time} for a visualization, where the data points correspond to the entries   of Table \ref{tab timings2} 
and the plotted curve is again a least-square fit of the run-times using a hyperbola. 
\begin{figure}
\caption{Display of  the run-times from Table \ref{tab timings2} for the numerical Godeaux surface   (in seconds) \medskip}%
\label{fig godeaux time}
\smallskip
\includegraphics[
height=12cm
]%
{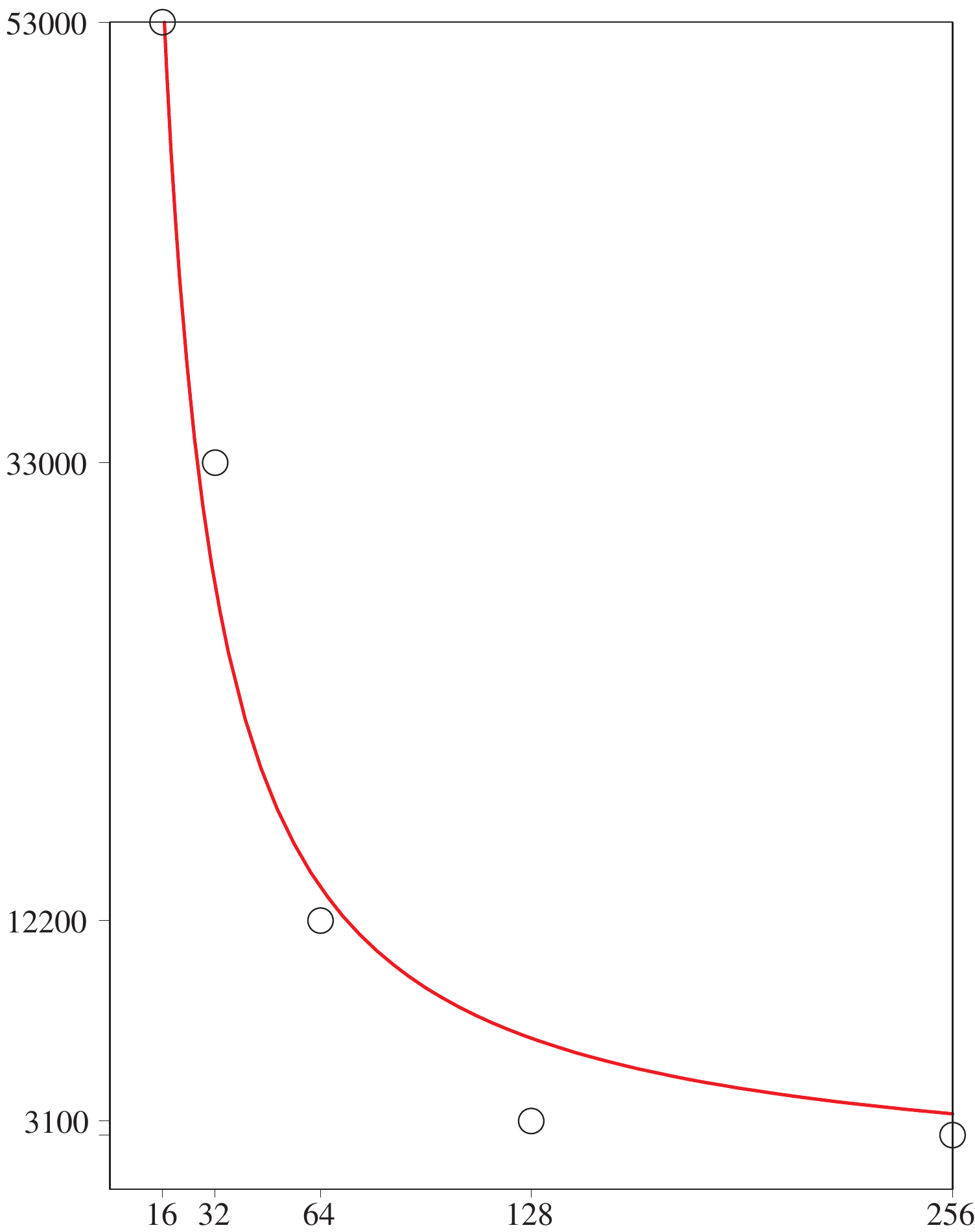}%
\end{figure}

We observe that in this example, we actually get a super-linear speedup, that is, when doubling the number of cores used by the algorithm, 
the computation time drops by more than a factor of two. We have identified two reasons for this effect. 

One  reason is purely technical:  If more cores than tasks are available to the algorithm, that is, the load factor is smaller than one, 
then each individual computation can use a larger memory bandwidth, which speeds up the computation. To indicate the
impact of the workload on the performance, Figure~\ref{fig speicher} 
\begin{figure}
\caption{Run-times for parallel individual Jacobian criterion computations on one node (in seconds)\medskip}%
\label{fig speicher}
\smallskip
\includegraphics[
height=12cm
]%
{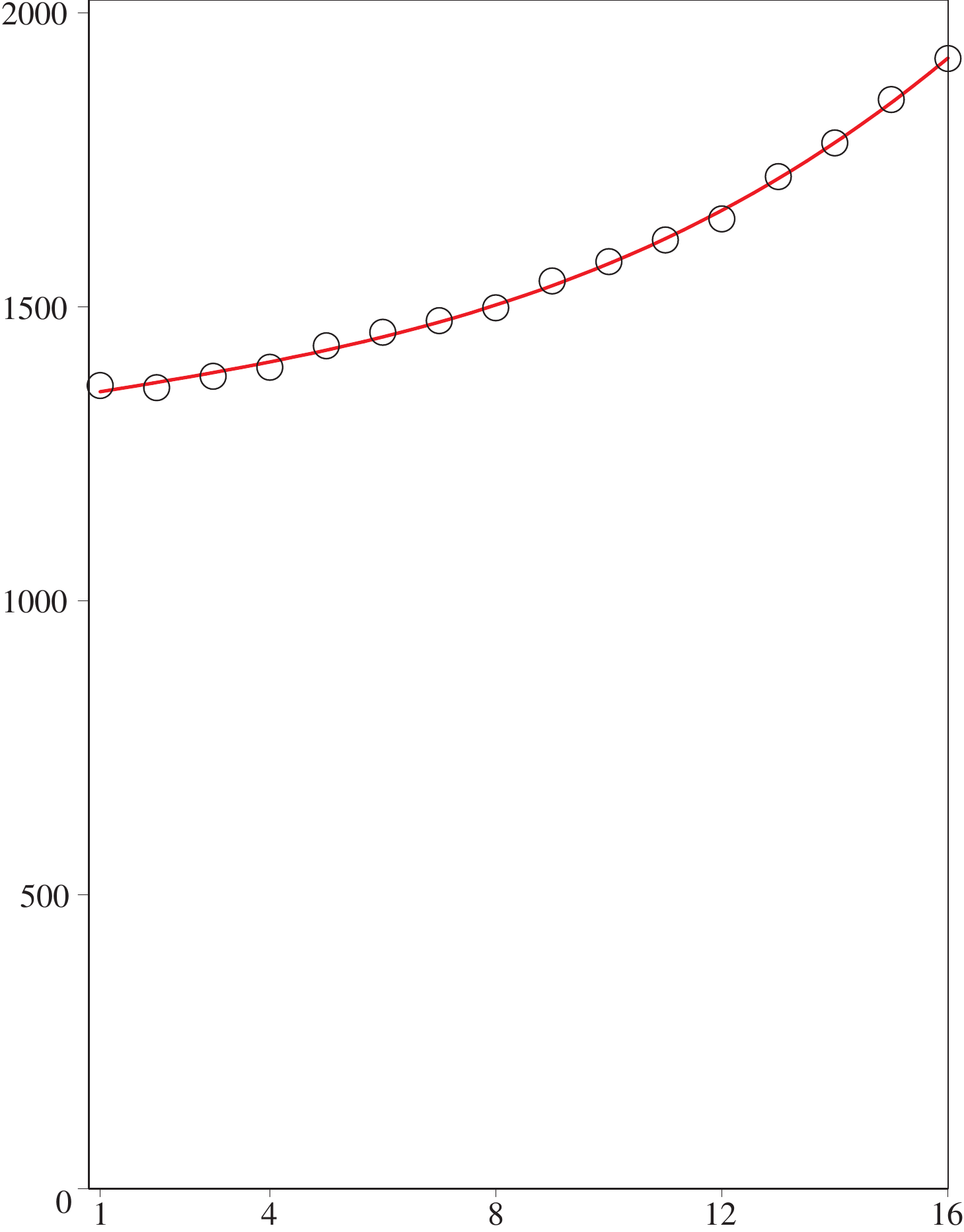}%
\end{figure}
shows the time used for parallel runs of a given number of copies of a single Jacobian criterion computation 
on a single node. While the load factor of the smoothness test  is close to $1$ when executed on less 
than $64$ cores, it drops to about $0.7$ on $256$ cores, which amounts to a speedup of about $30\%$.  

More important 
is the second reason,
which stems from the structure of the algorithm and the mathematics behind the surface under consideration: 
The smoothness of this surface is determined by considering (on the first level of the algorithm) all $14$ affine 
charts of the ambient projective space $\mathbb P_\bK^{13}$. The algorithmic subtrees of $4$ of these charts do not terminate during the descent in 
codimension within $50\,000$ seconds, while the final covering obtained by the algorithm will always consist of 
the same $4$ of the remaining $10$ charts: Since the implementation branches into all available 
choices in a massively parallel way and terminates once the surface is completely covered by charts, it will 
automatically determine that choice of charts which leads to the smoothness certificate in the fastest possible 
way. Note that the $10$ remaining charts above involve a total of 
$115$ sub-charts, so we cannot expect much scaling beyond this number of cores.

We have done a simulation of this behavior of the Petri net using the actual computation times of the individual sub-steps of the algorithm for all available choices (sampling all timings for the sub-steps in the same environment). The simulated scaling with the number of cores matches very well the actual behavior of the implementation on the cluster, see Figure  \ref{fig simulation}
\begin{figure}
\caption{Simulated timings for determining smoothness of the numerical Godeaux surface via the hybrid smoothness test\medskip}%
\label{fig simulation}
\smallskip
\includegraphics[
height=13cm
]%
{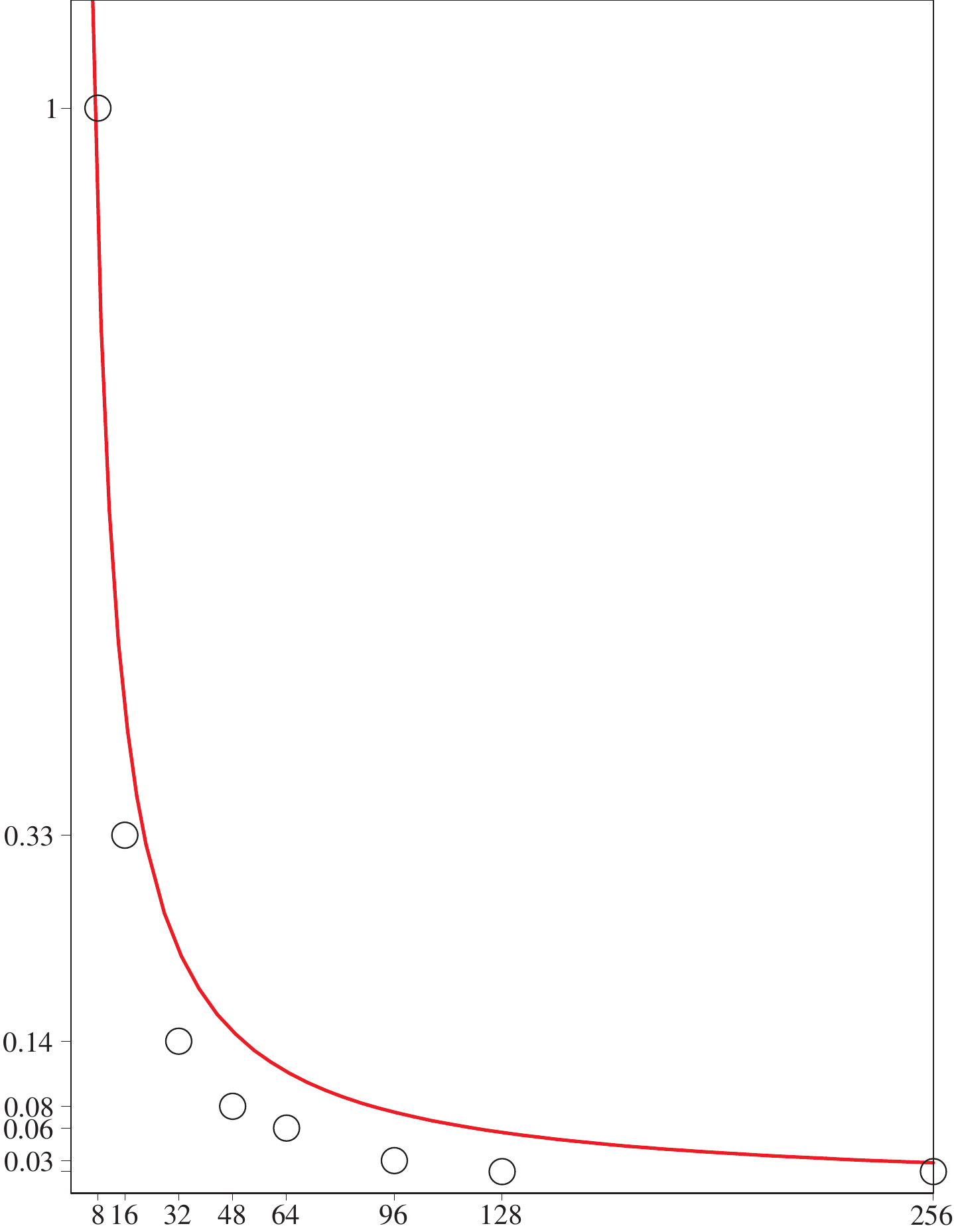}%
\end{figure} for the synthetic timings (normalized to value one  for $8$ cores): With up to $4$ available cores, all cores will run into an unfavorable chart with probability almost $1$, while for $8$ to $128$ cores, we observe a  super-linear speedup. As expected from the geometric structure of the specific problem, the simulation does not show a significant further speedup beyond $128$ cores. 

To summarize, when working with charts, we have the flexibility of  choosing a covering which leads to fast individual 
computations that are well-balanced with regard to their run-time, resulting in a good performance of the overall parallel algorithm. 
Due to the unpredictability of the individual computations, this choice cannot be made a priori in a heuristic way. However, with 
a massively parallel approach, the best possible choice is found automatically by the algorithm. The chart based nature 
of the smoothness test reflects a fundamental paradigm of algebraic geometry, the description of schemes and sheaves 
in terms of charts. One can, hence, expect that a similar approach will also be useful for further applications in algebraic 
geometry, for example, in the closely related problem of resolution of singularities.

\vspace{0.1in} \noindent\emph{Acknowledgements}. We would like to thank Bernd L\"orwald,
 Stavros Papadakis, Gerhard Pfister, Christian Reinbold, Bernd Schober, Hans Sch\"onemann, and Isabel Stenger for helpful discussions.


\begin{thebibliography}{AA}         




\bibitem{backus}J.\ Backus: \emph{Can Programming Be Liberated from the von Neumann Style? A functional Style and Its Algebra of Programs.} 1977 Turing Award Lecture, Comm. ACM 21(8), 1978.
                                                            %
\bibitem {BHPV}Barth, W. P.; Hulek, K.; Peters, Chris A. M.; Van de Ven, A.:
\emph{Compact Complex Surfaces}, Ergebnisse der Mathematik und ihrer
Grenzgebiete. 3. Folge. 4, Springer (2004).

\bibitem{zbMATH05974000}Bauer, Ingrid; Catanese, Fabrizio, \emph{Surfaces of general type with geometric genus zero: a survey.} in Complex and differential geometry. Conference held at Leibniz Universit\"at Hannover, Germany, September 14--18, 2009. Proceedings, Springer Proc. Math. 8, 1--48 (2011)

\bibitem {BM}Bierstone, E.; Milman, P.: \emph{Canonical Desingularization in
Characteristic Zero by Blowing up the Maximum Strata of a Local Invariant},
Invent.Math. \textbf{128} (1997), 207--302

\bibitem{FareyPaper}B\"{o}hm, J.; Decker, W.; Fieker,
C.; Pfister, G.: \emph{The use of bad primes in rational reconstruction}, Math. Comp. 84, 3013-3027 (2015).

\bibitem{BDLS2}B{\"o}hm, J.; Decker, W.; Laplagne, S.; Pfister, G.:
      \emph{Local to global algorithms for the Gorenstein adjoint ideal of a curve}
       in Böckle et al. (ed.) Algorithmic and Experimental Methods in Algebra, Geometry, and Number Theory, Springer (2018), 51--96.

\bibitem {BDLPSS}B\"{o}hm, J.; Decker, W.; Laplagne, S.; Pfister, G.;
Steenpa{\ss , A.;} Steidel, S.: \emph{Parallel Algorithms for Normalization}.
J. Symbolic Comput. \textbf{51} (2013), 99--114.

\bibitem{smoothtst}B\"ohm, J.; Fr\"uhbis-Kr\"uger, A.:
  \emph{A smoothness test for higher codimensions}.
   J. Symbolic Comput. \textbf{86} (2018), 153--165.

\bibitem{smoothtstlib}B\"{o}hm, J.; Fr\"{u}hbis-Kr\"{u}ger, A.:
\emph{smoothtst.lib - A \textsc{Singular} library for determining
smoothness of algebraic varieties}. \textsc{Singular} distribution,
\url{http://www.singular.uni-kl.de}.


\bibitem{BP3}B\"{o}hm, J.; Papadakis, S.:
\emph{Implementing the Kustin--Miller complex construction}, J. Softw. Algebra
Geom. 4 (2012), 6-11.

\bibitem{BPlib}B\"{o}hm J.; Papadakis S.:
\emph{KustinMiller -- The Kustin-Miller complex construction and resolutions
of Gorenstein rings} (2012), Macaulay2 package.


\bibitem{brauer}W.\ Brauer, W.\ Reisig: \emph{Carl Adam Petri und die \glqq{}Petrinetze\grqq{}}, Informatik-Spektrum 29(5), 2006.

\bibitem {BEV}Bravo, A. M.; Encinas, S.; Villamayor, O.: \emph{A Simplified
Proof of Desingularization and Applications}. Rev. Mat. Iberoamericana
\textbf{21} (2005), no. 2, 349--458.

\bibitem {BH}Bruns W.; Herzog J.: \emph{Cohen-Macaulay Rings, revised edition,
}Cambridge Studies in Advanced Mathematics \textbf{39}, Cambridge University
Press, Cambridge, 1998.

\bibitem{Singular}Decker, W.; Greuel, G.-M.; Pfister, G.;
Sch{\"{o}}nemann, H.: \emph{\newblock {\sc Singular} {4-1-1} --- A computer
algebra system for polynomial computations}.
\url{http://www.singular.uni-kl.de}.

\bibitem{dongarra}J.\ Dongarra, P.\ Beckmann: \emph{The international Exascale Software Roadmap.} International Journal of High Performance Computer Applications. Volume 25(1), 2011.

\bibitem{Eis} Eisenbud, D.: \emph{Commutative algebra. With a view toward algebraic geometry}, Springer (1995).

\bibitem {EH}Encinas, S.; Hauser, H.: \emph{Strong resolution of singularities
in characteristic zero}, Comment. Math. Helv. \textbf{77} (2002), 821--845.

\bibitem{Faugere}Faug\'ere, J.-C.: \emph{A new efficient algorithm for computing Gröbner bases (F4)}, Journal of Pure and Applied Algebra, Vol. \textbf{139}(1--3), (1999), 61--88.

\bibitem {FK1}Fr\"uhbis-Kr\"uger, A.: \emph{Computational Aspects of
Singularities}, in J.-P. Brasselet, J.Damon et al.: Singularities in Geometry
and Topology, World Scientific Publishing (2007), 253--327.


\bibitem{gelernter}D.\ Gelernter, N.\ Carriero: \emph{Coordination
  languages and their significance.} Comm. ACM 35(2), 1992.

\bibitem {MR0498596}D. Gieseker, Global moduli for surfaces of general type, Invent. Math. 43 (1977), no. 3, 233–282.

\bibitem{GiraudEtude}
J.~Giraud,
\newblock {\em \'{E}tude locale des singularit\'{e}s},
\newblock {Cours de $3^{\mbox{\`{e}me}}$ cycle, 1971-1972, Publ. Math. d'Orsay {\bf 26} (1972)}. 

\bibitem{GiraudPos}
J.~Giraud,
\newblock {Contact maximal en caract\'{e}ristique positive},
\newblock {\em Ann. Sci. \'Ec. Norm. Sup.} $4^{\mbox{{\`e}me}}$ s\'erie { \bf 8} (1975), 201--234. 

\bibitem {GS}Grayson, D.; Stillman, M.: \emph{Macaulay2, a software system for
research in algebraic geometry}, available at
\url{http://www.math.uiuc.edu/Macaulay2/}%


\bibitem {GP}Greuel, G.-M.; Pfister, G.: \emph{A Singular Introduction to
Commutative Algebra}. Springer (2008).

\bibitem {HH}Hauser, H.: \emph{Why the characteristic zero proof of resolution 
of singularities fails in positive characteristic}, Manuscript (2003), 
\url{https://homepage.univie.ac.at/herwig.hauser/}.


\bibitem{heiner_popova}M.\ Heiner, L.\ Popova-Zeugmann: \emph{Worst Case Analysis of Concurrent Systems with Duration Interval Petri Nets.} Professoren des Inst. für Informatik; 1997 May.

\bibitem {Hir}Hironaka, H.: \emph{Resolution of singularities of an algebraic
variety over a field of characteristic zero. I, II.} Ann. of Math. (2)
\textbf{79} (1964), 109--203, 205--326.

\bibitem {Hir1967}Hironaka, H.: \emph{On the characters $\nu^\star$ and $\tau^\star$ of singularities}. J. Math. Kyoto Univ. Volume 7, Number 1 (1967), 19-43. 

\bibitem{hughes}J.\ Hughes: \emph{Why Functional Programming Matters.} Computer Journal 32(2), 1989.

\bibitem{starpu}INRIA: \emph{StarPU}, \url{http://starpu.gforge.inria.fr}, 2016.

\bibitem {CPN}Jensen, K.: \emph{Coloured Petri Nets. Volume 1}. Springer (1992).

\bibitem{karp_miller}R.\ M.\ Karp, R.\ E.\ Miller: \emph{Parallel program schemata.} J. Comput. Syst. Sci., 3(2):147--195, 1969.

\bibitem{kosaraju}S.\ R.\ Kosaraju: \emph{Decidability of reachability in vector addition systems (preliminary version)}. STOC, 267--281. ACM, 1982.


\bibitem {KM}Kustin, A.; Miller, M.: \emph{Constructing big Gorenstein ideals
from small ones}, J. Algebra \textbf{85} (1983), 303-322.

\bibitem{ompss}Labarta, J.: \emph{The OmpSs Programming Model.}, \url{http://www.par.univie.ac.at/project/peppher/hipeac12/slides/Jesus\_Labarta.pdf}, 2012.

\bibitem{lambert_reach}Lambert, J.\ L.: \emph{A structure to decide reachability in Petri nets.} TCS, 99(1) (1992), 79--104.

\bibitem{MR}Marais, M. S.; Ren, Y.: \emph{Mora’s holy grail: Algorithms for computing in localizations at prime ideals.}
Internat. J. Algebra Comput. 25:07 (2015), 1125-1143.

\bibitem{mayr_reach}Mayr, E.W.: \emph{An algorithm for the general Petri net reachability problem.} STOC, 238-.246. ACM, 1981.


\bibitem{MM}Mayr, E.W.; Meyer,  A.R.: \emph{The complexity of the word problems for
commutative semigroups and polynomial ideals}. Adv. Math 46 (1982), 305--329.

\bibitem {LP}Mendes Lopes, M.; Pardini, R.: \emph{Numerical Campedelli surfaces with fundamental group of order 9},  J. Eur. Math. Soc. 10, no. 2, (2008), 457--476.


\bibitem {TangentCone}Mora, T.: \emph{An algorithm to compute the equations of tangent cones}, in: Proceedings EUROCAM
82, Springer (1982), 158--165.

\bibitem {HolyGrail}Mora, T.: \emph{La queste del Saint Gra(AL): A computational approach to local algebra},
Discrete Appl. Math. 33 (1991), 161--190.

\bibitem {NP1}Neves, J.; Papadakis, S.: \emph{A construction of numerical
Campedelli surfaces with ZZ/6 torsion}, Trans. Amer. Math. Soc. \textbf{361}
(2009), 4999-5021.


\bibitem{petri}C.\,A.\ Petri: \emph{Kommunikation mit Automaten.}
  Schriften des IIM Nr.\ 2, Institut f\"ur instrumentelle Mathematik,
  Bonn, 1962.

\bibitem {GPI}Pfreundt, F.-J. ; Rahn, M.; et al.: \emph{GPI-space}, Fraunhofer
ITWM Kaiserslautern,
\url{http://www.gpi-space.de/}.

\bibitem{priese_wimmel}L.\ Priese, H.\ Wimmel. \emph{Petri-Netze.} Springer, 2003.

\bibitem {ReidOV}M. Reid, Godeaux and Campedelli surfaces. \url{https://homepages.warwick.ac.uk/~masda/surf/more/Godeaux.pdf}.

\bibitem{FOS-OW}Schreyer, Frank-Olaf: \emph{An experimental approach to numerical Godeaux surfaces.} In Oberwolfach Report 7/2005, 
Komplexe Algebraische Geometrie, 434--436, 2005. 


\bibitem{sppexa}\emph{Software for Exascale Computing. Proposal to the German research Foundation to establish a Priority Program in the multidisciplinary field of High Performance Computing}.


\bibitem{parsec}University of Tennessee: \emph{PaRSEC}, \url{http://icl.cs.utk.edu/parsec/overview/index.html}, 2016.


\bibitem{aalst}W.\,M.\,P\ van\ der\ Aalst: \emph{Three Good reasons for Using a Petri-net-based Workflow Management System.} Proc. of the International Working Conference on Information and Process Integration in Enterprises (IPIC'96), 1996.

\bibitem {V1}Villamayor, O.: \emph{Constructiveness of Hironaka's resolution}, Ann. Sci. Ecole Norm Sup. (4) 22 (1989), no. 1, 1-32.

\bibitem{ZSCA} Zariski, O.; Samuel, 
  P.\: {\it Commutative Algebra.} Vols.\ I and II.
  Corr.\ 2nd printing of the 1958--1960 edition. Springer, 1975--1976.












\end{thebibliography}
\end{document}